\tikzset{>=stealth',every on chain/.append style={join},
         every join/.style={->}}
\tikzstyle{labeled}=[execute at begin node=$\scriptstyle,
\renewcommand{\arraystretch}{1.35}
\def\binomh#1#2{ \scalebox{.3}[1.2]{\textbf{)}}{\genfrac{}{}{0pt}{}{#1}{#2}}\scalebox{.3}[1.2]{\textbf{(}} }
\def\binomhh#1#2{ \scalebox{.4}[1.7]{\textbf{)}}{\genfrac{}{}{0pt}{}{#1}{#2}}\scalebox{.4}[1.7]{\textbf{(}} }
\def\ve{\varepsilon}
\def\vp{\varphi}
\newtheorem{theorem}{Theorem}
\newtheorem{lemma}{Lemma}[section]
\newtheorem{cor}{Corollary}
\newtheorem{rem}{Remark}
\title{\bf Hyperbolic Pascal triangles 
}
\author{Hacene Belbachir\footnote{STHB, Faculty of Mathematics, Algeria.  \textit{hbelbachir@usthb.dz}}, 
 L\'aszl\'o N\'emeth\footnote{University of West Hungary,  Institute of Mathematics, Hungary. \textit{nemeth.laszlo@emk.nyme.hu}}, 
 L\'aszl\'o Szalay\footnote{University of West Hungary,  Institute of Mathematics, Hungary.  \textit{szalay.laszlo@emk.nyme.hu}}}
\date{\today}
\begin{document}

\maketitle

\begin{abstract}
In this paper, we introduce a new generalization of Pascal's triangle. The new object is called the hyperbolic Pascal triangle since the mathematical background goes back to regular mosaics on the hyperbolic plane. We describe precisely the procedure of how to obtain a given type of hyperbolic Pascal triangle from a mosaic. Then we study certain quantitative properties such as the number, the sum, and the alternating sum of the elements of a row. Moreover, the pattern of the rows, and the appearence of some binary recurrences in a fixed hyperbolic triangle are investigated. \\[1mm]
{\em Key Words: Pascal triangle, regular mosaics on hyperbolic plane.}\\
{\em MSC code: 11B99, 05A10, }    
\
\end{abstract}

\section{Introduction}\label{sec:introduction} 
There exist several variations of Pascal's arithmetic triangle (see, for instance, \cite{B} or \cite{BSz}).
This study provides a new generalization. The innovation is to extend the link between the infinite graph corresponding to the regular square Euclidean mosaic and classical Pascal's triangle to the hyperbolic plane, which contains infinitely many types of regular mosaics. The purpose of this paper is to describe this generalization and examine certain properties of the hyperbolic Pascal triangles. From one point we will focus on only one regular square mosaic given by the pair $\{4,q\}$, $q\ge5$. Later we will specialize in the case $q=5$. We remark, that in this paper the terminology Pascal's triangle is used only for Blaise Pascal's original triangle, while the expression Pascal triangle (without apostrophe) means a variation or generalization of Pascal's triangle. 

Let $p$ and $q$ denote two positive integers satisfying $p\ge3$, $q\ge3$, and consider the regular mosaic characterized by the Schl\"afli's symbol $\{p,q\}$. The parameters indicate that for regular $p$-gons of cardinality exactly $q$ meet at each vertex. 
If $(p-2)(q-2)=4$,  the mosaic belongs to the Euclidean plane, if $(p-2)(q-2)>4$ or $(p-2)(q-2)<4$, the mosaic is located on the hyperbolic plane or on the sphere, respectively (see, for instance \cite{C}). From now on we exclude the case $(p-2)(q-2)<4$, since we intend to deal only with infinite mosaics. 

The vertices and the edges of a given mosaic determine an infinite graph ${\cal G}$. Fix an arbitrary vertex, say $V_0$. Obviously, the $q$-fold rotation symmetry around $V_0$ and $q$ different mirror symmetries of the mosaic induce a $q$-fold rotation symmetry and mirror symmetries on the graph. Consequently, in accordance with the object of examination, it is sufficient to consider only a certain subgraph ${\cal G}^\prime$ of ${\cal G}$. In order to introduce hyperbolic Pascal triangles, now we define our own ${\cal G}^\prime$ precisely.

Restrict the mosaic to an unbounded part ${\cal P}$ of itself by defining first the border of ${\cal P}$, which contains regular $p$-gons as follows. The border splits the mosaic into two more parts, and the part which contains just one symmetric line among the $q$ different symmetric lines through the vertex $V_0$, together with the border gives ${\cal P}$. 

Taking a regular $p$-gon which fits to $V_0$, we consider it as the $0^{th}$ (base) cell of the border. Beginning from $V_0$, figure the edges of the base $p$-gon by $1$, $2$, \ldots, $p$  anti-clockwise (see Figure~\ref{fig:border}).  For the case of even and odd $p$, let first $p=2k$ $(k\geq 2)$. Mirror the base cell across the edge $k$ to obtain the first cell of the left hand side part of the border. We assume that the reflection transmits the figuring of the edges as well, for instance the edge $k$ is common of the $0^{th}$ and $1^{st}$ cells.  Then to gain the second cell we reflect the first $p$-gon through the edge $2k$ of the first cell. And so on, we always mirror the $\ell^{th}$ cell through the edge $k$ or $2k$ to obtain the $(\ell+1)^{th}$ element of the left hand side of the border. Getting back to $V_0$ and cell $0$, one can get the right hand side part of the border in a similar way by reflecting the appropriate elements across the edges $k+1$ and $1$, alternately. 
If $p=2k+1\ge5$, first we reflect the base cell across the edge $k+1$, and after then we can get the border in an analogous way to the case of even $p$ if we apply reflections through the edges $1$, $k+2$ (LHS) and $2k+1$, $k$ (RHS), alternately in both cases. 
If $p=3$, then after reflecting the base cell of the mosaic across the edge 2, we construct the LHS and RHS borders by consecutive reflections across edges $1,3,2,1,3,2,\dots$, and edges $3,1,2,3,1,2,\dots$, respectively. 
Figure \ref{fig:border} shows the construction of the border of $\cal P$ (and $\cal P$ itself when one joins the blue and yellow parts), first if $p$ is even, second if $p\ge5$ is odd, and then when $p=3$.  Moreover, Figure~\ref{fig:mosaic_45_pascal} illustrates the case $\{4,5\}$.

\begin{figure}[h!]
 \centering
 \includegraphics{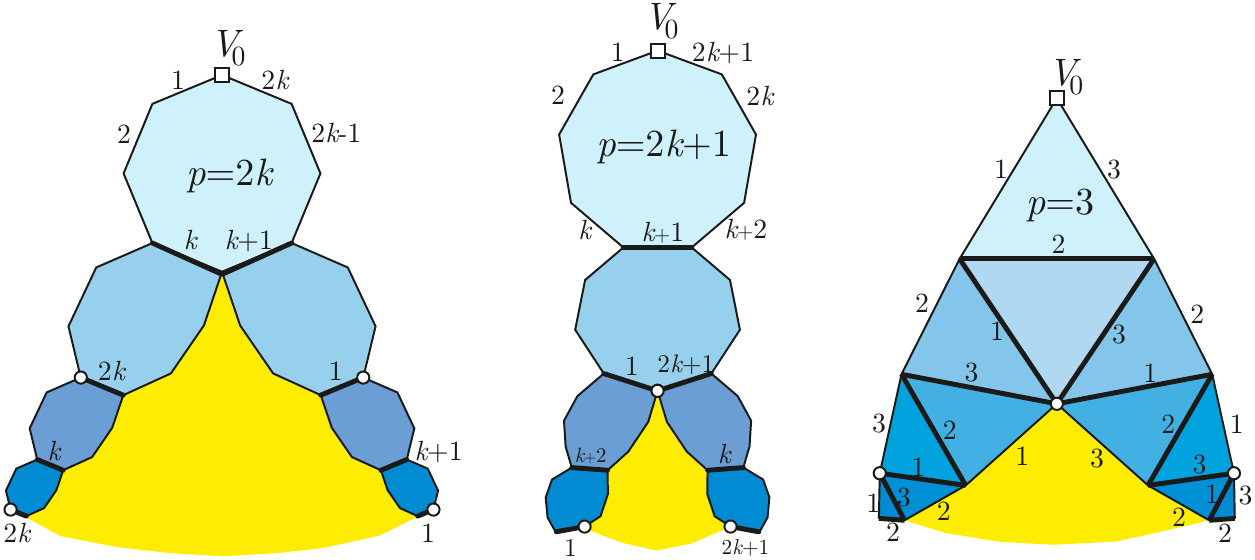}
 \caption{Construction of the border of $\cal{P}$}
 \label{fig:border}
\end{figure} 

Since ${\cal P}$ is a definite part of the mosaic, it eliminates a subgraph ${\cal G}_{\cal P}$ in the graph ${\cal G}$ of the whole mosaic. In the sequel, we always consider only this graph  ${\cal G}_{\cal P}$.  Assume that all edges have unit length. The distance of an arbitrary vertex $V$ and $V_0$ is the length of the shortest path between them.
It is clear, that the shortest path from $V_0$ to any vertex of the outer boundary of ${\cal P}$ is unique, and passing through on the outer boundary itself. Inside of ${\cal P}$ there are always at least two shortest path from $V_0$ to any $V$.       
Let us label all vertices $V$ of ${\cal G}_{\cal P}$ by the number of distinct shortest paths from $V_0$ to $V$. As the first illustration
take the Euclidean squared mosaic (i.e.~$\{p,q\}=\{4,4\}$). Then the subgraph ${\cal G}_{\cal P}$ with its labelling returns with Pascal's original triangle (see Figure \ref{fig:mosaic_44_pascal}). 

\begin{figure}[h!]
 \centering
 \includegraphics[width=5.5cm]{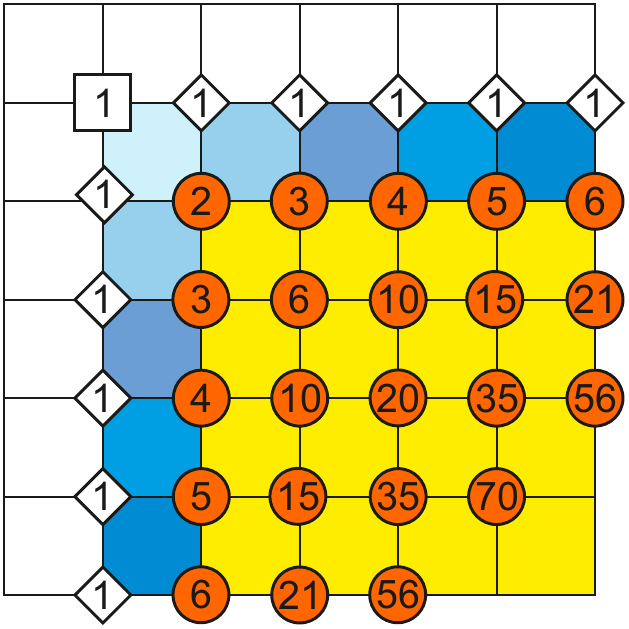}
 \caption{Pascal's triangle on the Euclidean mosaic $\{4,4\}$}
 \label{fig:mosaic_44_pascal}
\end{figure}

The other two Euclidean regular mosaics have no great interest since they are also associated with Pascal's triangle (see Figure \ref{fig:mosaic_3663_pascal}). 
In case of $\{3,6\}$ the appearance is direct, while $\{6,3\}$ displays all rows twice.
But any hyperbolic mosaic $\{p,q\}$, in the manner we have just described, leads to a so-called hyperbolic Pascal triangle (see the mosaic $\{4,5\}$, Figure \ref{fig:mosaic_45_pascal}). Thus the Euclidean mosaics and the hyperbolic mosaics together, based on the idea above, provide a new generalization of Pascal's triangle. In the next section, we narrow the spectrum of the investigations by concentrating only on the mosaics having Schl\"afli's symbol $\{4,q\}$. We do this to demonstrate the most spectacular class of hyperbolic Pascal triangles, when the connection to Pascal's triangle is the most obvious.

\begin{figure}[h!]
 \centering
 \includegraphics[width=13cm]{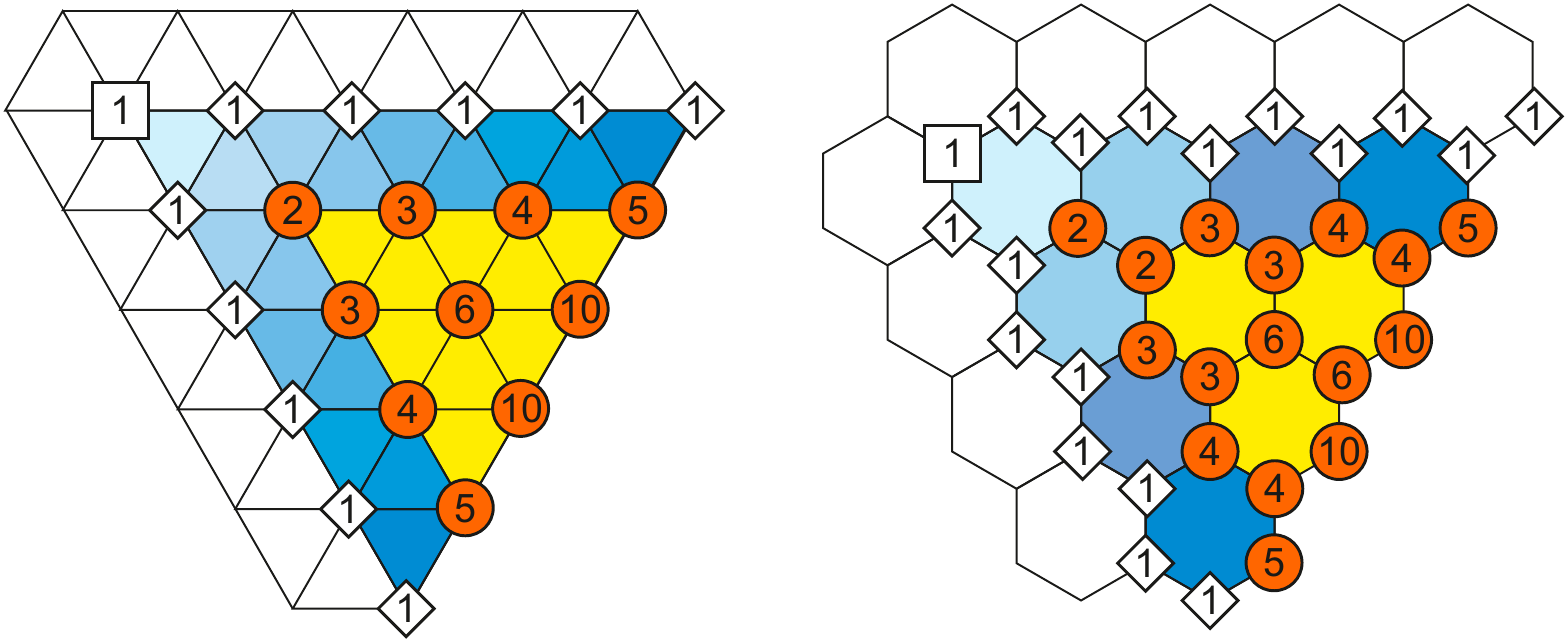}
 \caption{Pascal's triangle on the Euclidean mosaics $\{3,6\}$ and $\{6,3\}$ }
 \label{fig:mosaic_3663_pascal}
\end{figure}

\begin{figure}[h!]
 \centering
 \includegraphics{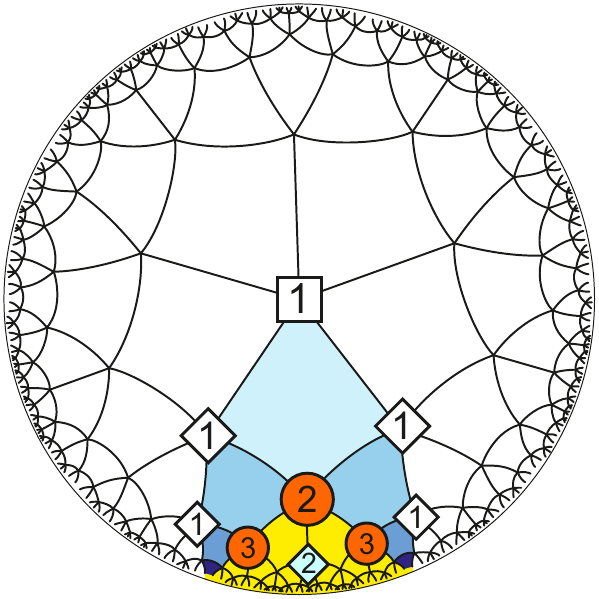}
 \caption{Pascal triangle on the hyperbolic mosaic $\{4,5\}$}
 \label{fig:mosaic_45_pascal}
\end{figure}


\section{Hyperbolic Pascal triangle linked to $\{4,q\}$}\label{sec:hpt4q} 

Let us fix a mosaic given by $\{4,q\}$, $q\ge5$. In order to have a more favourable object to study, now we introduce a graph ${\cal I}_{4,q}$, isomorphic to ${\cal G}_{\cal P}$ and leading immediately to the corresponding hyperbolic Pascal triangle. We ``reorganize'' the graph ${\cal G}_{\cal P}$ by defining rows (or levels) to get ${\cal I}_{4,q}$ as follows. The $0^{th}$ row of ${\cal I}_{4,q}$ contains only $V_0$ itself. Generally, the $j^{th}$ row ($j\ge1$) consists of the vertices having distance $j$ from $V_0$. According to the definition of the border of ${\cal P}$, the leftmost vertex in the $j^{th}$ row ($j\ge1$) has one edge upwards (to level $j-1$) connecting to the leftmost vertex of the $(j-1)^{th}$ row, further two edges downwards (to level $j+1$), one goes into the leftmost vertex of the $(j+1)^{th}$ row, the other is the edge played the role of mirror axis in constructing the border of ${\cal P}$. Thus, apart from $V_0$ which has degree 2, all the elements on the ``left leg'' have degree 3. (Note that in the whole mosaic, any vertex has degree $q$, hence $q-3$ neighbours of any element of the left leg are outside of ${\cal I}_{4,q}$.) The analogous observation is valid for the ``right leg'' of  ${\cal I}_{4,q}$ through mirror symmetry. The inner part of ${\cal I}_{4,q}$ consists of two kinds of vertices, although all of them have degree $q$. Generally, if $V$ is located in the $j^{th}$ row ($j\ge2$), then it has either two edges or one edge upwards (to the row $(j-1)$), and either $q-2$ or $q-1$ edges downwards (to the row $(j+1)$), respectively. 

The description above provides the following unforced algorithm to produce the graph ${\cal I}_{4,q}$. Locate $V_0$ into row 0. The first row contains only two vertices, say $V_{1,0}$ and $V_{1,1}$ from left to right, both of them are connected to $V_0$. Both $V_{1,0}$ and $V_{1,1}$ have two edges going down to the second row, but the second edge of $V_{1,0}$ and the first edge of $V_{1,1}$ meet in one vertex of the graph (in $V_{2,1}$). Hence row 2 admits three vertices: $V_{2,0}$, $V_{2,1}$ and $V_{2,2}$. We construct the third row, and then we are able to give the general rule.  Both $V_{2,0}$ and $V_{2,2}$ have two descendants, while $V_{2,1}$ has $q-2$ descendants since it owns two edges upwards. The second edge coming from $V_{2,0}$ and the first edge from $V_{2,1}$ meet in one vertex of the third row. The same happens with the $(q-2)^{th}$ edge of $V_{2,1}$ and first edge of $V_{2,2}$. The remaining $q-4$ edges of $V_{2,1}$ are linked to $q-4$ distinct vertices of row 3. Thus, in this row, apart from the leftmost and rightmost elements we can observe two types of vertices. Some of the vertices (called ``Type A'' for convenience) have two ascendants and $q-2$ descendants, the others (``Type B'') have one ascendant and $q-1$ descendants. This is true for all rows below, and shows the general method of drawing. Going along the vertices of the $j^{th}$ row, according to type of the elements (winger, $A$, $B$), we draw appropriate number of edges downwards (2, $q-2$, $q-1$, respectively). Neighbour edges of two neighbour vertices of the $j^{th}$ row meet in the $(j+1)^{th}$ row, constructing a vertex with type $A$. The other descendants of row $j$ in row $j+1$ have type $B$ (see Figure  \ref{fig:Pascal_growing4q}).

\begin{figure}[h!]
 \centering
 \includegraphics[width=7cm]{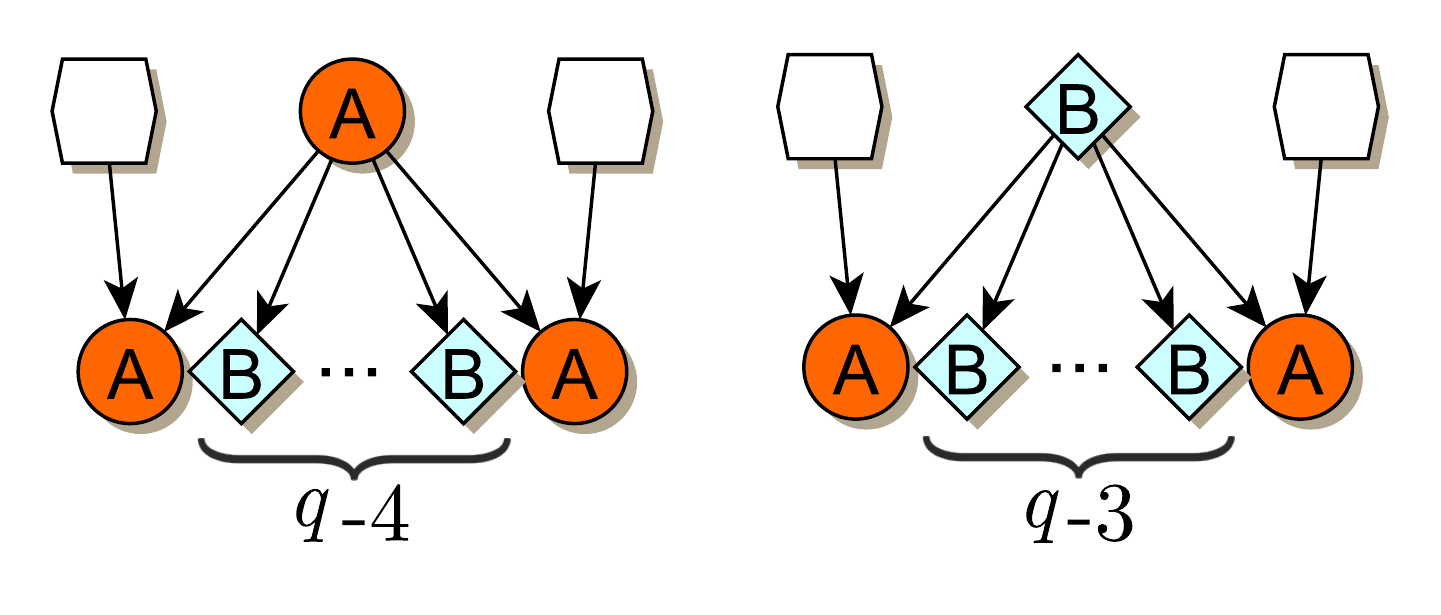}
 \caption{Growing method in the case $\{4,q\}$}
 \label{fig:Pascal_growing4q}
\end{figure} 

Finally, if we indicate the labels of the vertices (by marking the number of the distinct shortest paths to reach them from $V_0$), we obtain a new object: a hyperbolic Pascal triangle with parameters $\{4,q\}$. Figure~\ref{fig:Pascal_46_layer5} illustrates the labelled graph ${\cal I}_{4,6}$ up to the fifth row. The Pascal triangle appears on graph ${\cal I}_{4,q}$ will also be denoted by ${\cal I}_{4,q}$. In the sequel, similarly to the notation of classical binomial coefficients, we denote by $\binomh{j}{k}$ the
integer which is the $k^{th}$ element in row $j$ of the given hyperbolic Pascal triangle.  

\begin{figure}[h!]
 \centering
 \includegraphics[width=0.99\linewidth]{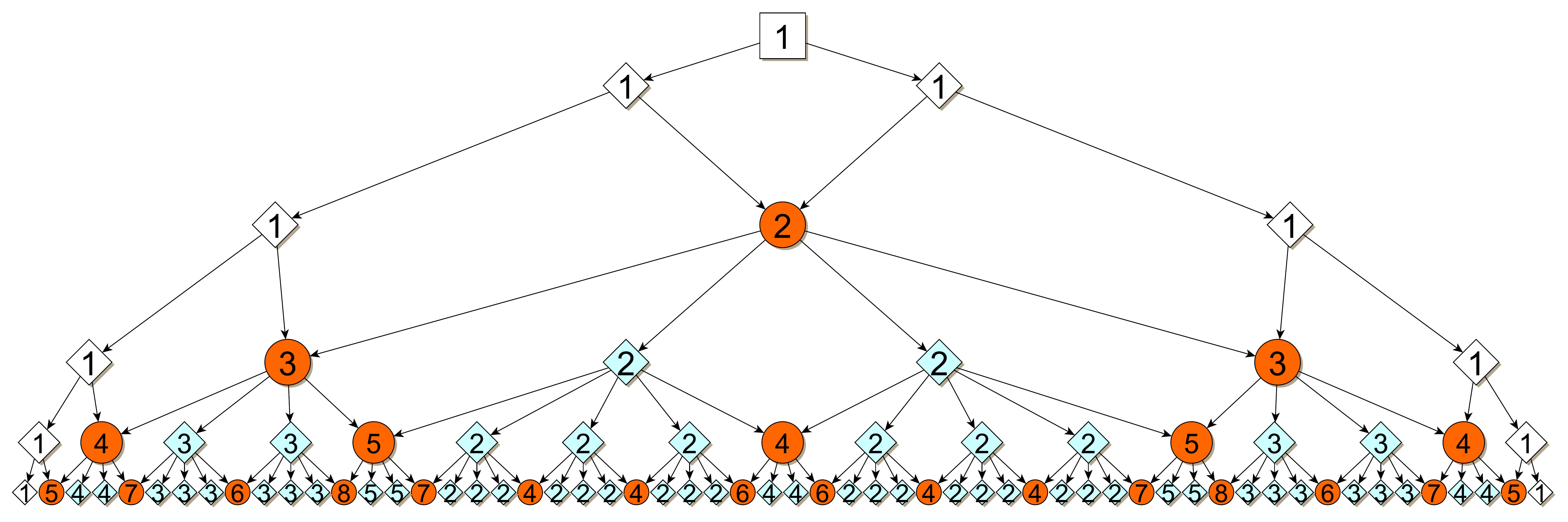}
 \caption{First rows of the hyperbolic Pascal triangle linked to $\{4,6\}$}
 \label{fig:Pascal_46_layer5}
\end{figure}

Clearly, the calculation of the labels is the following: it either keeps the label of its unique ascendant or is the sum of the labels of its two ascendants. This property shows directly that we arrived at a generalization of the original Pascal's triangle. If we consider again, for a moment the Euclidean mosaic $\{4,4\}$, in this case no elements with type $B$ exist (see Figure~\ref{fig:mosaic_44_pascal} again). 

In studying the quantitative properties of the hyperbolic Pascal triangle ${\cal I}_{4,q}$, first we determine the number of the elements of the $n^{th}$ row of the graph. Denote by $a_n$ and $b_n$ the number of vertices of type $A$ and $B$, respectively, further let 
\begin{equation}\label{sn}
s_n=a_n+b_n+2, 
\end{equation}
which gives the total number of the vertices of row $n\ge1$. Recall, that $q\ge5$.

\begin{theorem}\label{theorem:numvertex4q}  
The three sequences $\{a_n\}$, $\{b_n\}$ and $\{s_n\}$ can be described by the same ternary homogenous recurrence relation 
\begin{equation}\label{recur1}
x_n=(q-1)x_{n-1}-(q-1)x_{n-2}+x_{n-3}\qquad (n\ge4),
\end{equation}
the initial values are 
$$
a_1=0,\;a_2=1,\;a_3=2,\qquad b_1=0,\;b_2=0,\;b_3=q-4,\qquad s_1=2,\;s_2=3,\;s_3=q.
$$ 
Moreover, the explicit formulae
\begin{eqnarray}\label{abcn4q}
a_n&=&\left(\frac{2-q}{2}+\frac{q^2-4q+2}{2q(q-4)}\sqrt{D}\right)\alpha_q^n+\left(\frac{2-q}{2}-\frac{q^2-4q+2}{2q(q-4)}\sqrt{D}\right)\beta_q^n+1, \\
b_n&=&\left(\frac{q-3}{2}+\frac{1-q}{2q}\sqrt{D}\right)\alpha_q^n+\left(\frac{q-3}{2}-\frac{1-q}{2q}\sqrt{D}\right)\beta_q^n-1, \nonumber \\
s_n&=&\left(-\frac{1}{2}+\frac{q-2}{2q(q-4)}\sqrt{D}\right)\alpha_q^n+\left(-\frac{1}{2}-\frac{q-2}{2q(q-4)}\sqrt{D}\right)\beta_q^n+2 \nonumber
\end{eqnarray}
are valid for $n\ge1$, where 
$$
D=q^2-4q,\qquad\alpha_q=\frac{(q-2)+\sqrt{D}}{2},\qquad \beta_q=\frac{(q-2)-\sqrt{D}}{2}.
$$
\end{theorem}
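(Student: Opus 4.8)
The plan is to first establish a system of first‑order recurrences relating $a_n$ and $b_n$ to the counts in the previous row, then decouple it into the ternary recurrence \eqref{recur1}, and finally pin down the closed forms by the characteristic polynomial together with the stated initial values.

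\textbf{Step 1: the coupled recurrence.} I would count descendants row by row using the growth rule described just before the theorem. A winger of row $n$ produces $2$ descendants, a type $A$ vertex produces $q-2$, and a type $B$ vertex produces $q-1$; moreover, along each row of $s_n$ vertices there are exactly $s_n-1$ "meeting points" of neighbouring edge‑pairs, and these meeting points are precisely the type $A$ vertices of row $n+1$. Hence $a_{n+1}=s_n-1=a_n+b_n+1$. The total number of vertices in row $n+1$ is the number of descendant‑edges of row $n$ minus the number of coincidences (each type $A$ vertex in row $n+1$ corresponds to one coincidence of two edges): counting edges gives $2\cdot 2+(q-2)a_n+(q-1)b_n$ descendant‑edges, and subtracting $a_{n+1}$ coincidences yields $s_{n+1}$. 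Combining with $s_{n+1}=a_{n+1}+b_{n+1}+2$ then gives a linear expression for $b_{n+1}$ in terms of $a_n,b_n$. This produces a $2\times 2$ linear system
\begin{equation*}
a_{n+1}=a_n+b_n+1,\qquad b_{n+1}=(q-3)a_n+(q-2)b_n-1,
\end{equation*}
valid for $n\ge 1$ (a couple of small rows should be checked by hand against Figures \ref{fig:Pascal_growing4q} and \ref{fig:Pascal_46_layer5} to fix the inhomogeneous constants and the starting index). I expect the careful bookkeeping here — correctly matching edges, meeting points, and the two wingers, and getting the off‑by‑one constants right — to be the main obstacle; once the system is correct, everything else is routine.

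\textbf{Step 2: from the system to the ternary recurrence.} The homogeneous part of the above system has matrix $M=\begin{pmatrix}1&1\\ q-3&q-2\end{pmatrix}$ with characteristic polynomial $\lambda^2-(q-1)\lambda+1$. To obtain a single scalar recurrence I would eliminate: write $b_n=a_{n+1}-a_n-1$ from the first equation and substitute into the second to get a second‑order recurrence for $a_n$ with a constant inhomogeneity; the associated homogeneous recurrence is $x_n=(q-1)x_{n-1}-x_{n-2}$. Absorbing the constant term by passing to the difference $x_n-x_{n-1}$ (equivalently, applying the shift operator $E-1$) upgrades this to the third‑order homogeneous recurrence \eqref{recur1}, whose characteristic polynomial factors as $(\lambda-1)(\lambda^2-(q-1)\lambda+1)$. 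Since $s_n=a_n+b_n+2$ is a fixed linear combination of the two sequences plus a constant, it satisfies the same third‑order recurrence; and the inhomogeneous constant "$1$" added to (or subtracted from) each of $a_n,b_n,s_n$ is exactly the contribution of the root $\lambda=1$. Reading off the initial values $a_1,a_2,a_3$, $b_1,b_2,b_3$, $s_1,s_2,s_3$ from the first few rows completes the recurrence part of the statement.

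\textbf{Step 3: the explicit formulae.} The nonunit roots of the characteristic polynomial are $\alpha_q,\beta_q=\big((q-2)\pm\sqrt{D}\big)/2$ with $D=q^2-4q$, and $\alpha_q\beta_q=1$, $\alpha_q+\beta_q=q-2$. Thus each of $a_n,b_n,s_n$ has the form $C_1\alpha_q^n+C_2\beta_q^n+C_0$ with $C_0\in\{1,-1,2\}$ the fixed point of the inhomogeneous second‑order recurrence (respectively $1$, $-1$, $2$), and $C_1,C_2$ determined by two of the initial conditions; by symmetry of the problem under $\alpha_q\leftrightarrow\beta_q$ the coefficients $C_1,C_2$ are conjugate, i.e.\ of the form $u\pm v\sqrt{D}$. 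Solving the resulting $2\times 2$ linear systems — using $\alpha_q-\beta_q=\sqrt{D}$ and $\alpha_q+\beta_q=q-2$ to simplify — yields precisely the coefficients displayed in \eqref{abcn4q}; I would present this as a short verification rather than a derivation, and double‑check one value (say $n=3$) numerically for a convenient $q$ such as $q=5$ or $q=6$. This last step is purely computational and carries no conceptual difficulty.
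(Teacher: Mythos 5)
Your overall strategy is exactly the paper's: set up a coupled $2\times2$ system for $(a_n,b_n)$, eliminate to obtain the ternary recurrence, and then solve via the roots of the characteristic polynomial. However, your system for $b_{n+1}$ is wrong, and the error propagates. The correct relation is $b_{n+1}=(q-4)a_n+(q-3)b_n$: each type-$A$ vertex of row $n$ has $q-2$ descendants of which the two outermost merge with a neighbour's edge into type-$A$ vertices, leaving $q-4$ of type $B$; each type-$B$ vertex leaves $q-3$; the wingers contribute none. Indeed, your own edge count gives $s_{n+1}=4+(q-2)a_n+(q-1)b_n-a_{n+1}$, and subtracting $a_{n+1}+2$ yields $b_{n+1}=(q-4)a_n+(q-3)b_n$ after substituting $a_{n+1}=a_n+b_n+1$ — not the $(q-3)a_n+(q-2)b_n-1$ you wrote. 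A concrete check: for $q=5$ row $3$ is $W,A,B,A,W$, so $a_3=2$, $b_3=1$, and row $4$ has $s_4=10$, $a_4=4$, $b_4=4$; your formula predicts $b_4=2\cdot2+3\cdot1-1=6$ (and already $b_2=-1$ at $n=1$).

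The downstream consequence is that your matrix has trace $q-1$ and determinant $1$, so elimination produces $x_n=qx_{n-1}-qx_{n-2}+x_{n-3}$, not \eqref{recur1}. Your assertion that \eqref{recur1} has characteristic polynomial $(\lambda-1)(\lambda^2-(q-1)\lambda+1)$ is false: expanding gives $\lambda^3-q\lambda^2+q\lambda-1$, whereas \eqref{recur1} corresponds to $\lambda^3-(q-1)\lambda^2+(q-1)\lambda-1=(\lambda-1)\bigl(\lambda^2-(q-2)\lambda+1\bigr)$. Your Step 3 silently switches to the correct quadratic (its roots $\alpha_q,\beta_q$ satisfy $\alpha_q+\beta_q=q-2$, not $q-1$), which masks the inconsistency rather than resolving it. With the corrected system (trace $q-2$, determinant $1$) the elimination, the factor $\lambda=1$ absorbing the inhomogeneous constant, and the determination of the coefficients from $n=1,2,3$ all go through exactly as you describe and as in the paper.
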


\begin{proof}
Any two neighbour elements of row $n$ establish exactly one element of type $A$ in row $(n+1)$, i.~e.
\begin{equation*}
a_{n+1}=a_n+b_n+1.
\end{equation*}
Obviously, the number of the vertices of type $B$ in row $(n+1)$ is given by
\begin{equation*}
b_{n+1}=(q-4)a_n+(q-3)b_n.
\end{equation*}
For the sequences  $\{a_n\}$ and $\{b_n\}$, Lemma \ref{lemma:2seq} returns with (\ref{recur1}).

In case of the sequence $\{s_n\}$, first apply the definition (\ref{sn}) for $s_n$ and $s_{n+1}$. Then
$$
(q-1)(s_{n+1}-s_n)=(q-1)(a_{n+1}-a_n)+(q-1)(b_{n+1}-b_n)
$$ 
follows. Adding $s_{n-1}=a_{n-1}+b_{n-1}+2$ to both sides, we obtain $s_{n+2}$ on the right hand side. Hence the first part of the proof is complete.

Consider now the common characteristic polynomial 
$$p(x)=x^3-(q-1)x^2+(q-1)x-1=(x-1)(x^2-(q-2)x+1)$$ 
of the three homogenous ternary recursive sequences. It has real zeros 
$$
\alpha_q=\frac{(q-2)+\sqrt{q^2-4q}}{2},\qquad \beta_q=\frac{(q-2)-\sqrt{q^2-4q}}{2},\qquad \gamma_q=1.
$$
Consequently, for instance, $a_n$ is a linear combination of the terms $\alpha_q^n$, $\beta_q^n$ and $1$. In order to determine the coefficients in the linear combination we use the initial values $a_1$, $a_2$ and $a_3$. The three equations provide (\ref{abcn4q}). The treatment for $b_n$ is analogous. Finally, one can easily check the formula for $s_n$ by evaluating $s_n=a_n+b_n+2$.
\end{proof}
\medskip

It is well known that in Pascal's triangle the sum of all the binomial coefficients of the $n^{th}$ row is $\xi_n=2^n$. Therefore, the question arises naturally about the sum of the elements of row $n$ in the hyperbolic Pascal triangle linked to $\{4,q\}$. Although we do not know explicit formula for the elements themselves, we are able to determine the sum. The result can also be described by homogenous ternary recurrences. Comparing it to $\xi_n$, which is a geometric progression, or in other words a homogenous linear recurrence of order 1, the two sums are related.

Let $\hat{a}_n$, $\hat{b}_n$ and $\hat{s}_n$ denote the sum of type $A$, type $B$ and all elements of the $n^{th}$ row, respectively. We will justify the following statements. 

\begin{theorem}\label{theorem:sumvertex4q}  
The three sequences $\{\hat{a}_n\}$, $\{\hat{b}_n\}$ and $\{\hat{s}_n\}$ can be described by the same ternary homogenous recurrence relation 
\begin{equation*}
x_n=qx_{n-1}-(q+1)x_{n-2}+2x_{n-3}\qquad (n\ge4),
\end{equation*}
the initial values are 
$$
\hat{a}_1=0,\;\hat{a}_2=2,\;\hat{a}_3=6,\qquad \hat{b}_1=0,\;\hat{b}_2=0,\;\hat{b}_3=2(q-4),\qquad \hat{s}_1=2,\;\hat{s}_2=4,\;\hat{s}_3=2q.
$$ 
Moreover, 
\begin{eqnarray*}
\hat{a}_n&=&\left(\frac{1-q}{2}+\frac{q^2-2q-3}{2(q^2-2q-7)}\sqrt{D}\right)\hat{\alpha}_q^n+\left(\frac{1-q}{2}-\frac{q^2-2q-3}{2(q^2-2q-7)}\sqrt{D}\right)\hat{\beta}_q^n+2, \\
\hat{b}_n&=&\left(\frac{q-2}{2}-\frac{q^2-3q-2}{2(q^2-2q-7)}\sqrt{D}\right)\hat{\alpha}_q^n+\left(\frac{q-2}{2}+\frac{q^2-3q-2}{2(q^2-2q-7)}\sqrt{D}\right)\hat{\beta}_q^n-2, \\
\hat{s}_n&=&\left(-\frac{1}{2}+\frac{q-1}{2(q^2-2q-7)}\sqrt{D}\right)\hat{\alpha}_q^n+\left(-\frac{1}{2}-\frac{q-1}{2(q^2-2q-7)}\sqrt{D}\right)\hat{\beta}_q^n+2 
\end{eqnarray*}
hold, where 
$$
D=q^2-2q-7,\qquad\hat{\alpha}_q=\frac{(q-1)+\sqrt{D}}{2},\qquad \hat{\beta}_q=\frac{(q-1)-\sqrt{D}}{2}.
$$
\end{theorem}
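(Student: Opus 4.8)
The plan is to mirror the structure of the proof of Theorem~\ref{theorem:numvertex4q}, replacing the vertex \emph{counts} by the vertex \emph{label sums}. First I would set up the two coupled first-order recurrences for $\hat a_n$ and $\hat b_n$. Recall that each type-$A$ vertex in row $n+1$ arises from a pair of neighbouring vertices in row $n$ and carries the sum of their two labels, while each type-$B$ vertex copies the label of its unique ascendant. Summing the label of every row-$(n+1)$ vertex by grouping the contributions coming from each row-$n$ vertex, one sees that an interior row-$n$ vertex of label $v$ contributes $v$ to each of its downward edges; its number of downward edges is $q-2$ if it is type $A$ and $q-1$ if it is type $B$, and the two wingers contribute along $2$ edges each. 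Since every downward edge delivers the label $v$ either as a standalone $B$-label or as one summand of an $A$-label, one gets a clean identity of the form $\hat s_{n+1}=\hat a_{n+1}+\hat b_{n+1}$ together with
\begin{equation*}
\hat a_{n+1}=(\text{something})\cdot\hat a_n+(\text{something})\cdot\hat b_n+(\text{winger contribution}),\qquad \hat b_{n+1}=(q-4)\hat a_n+(q-3)\hat b_n+(\text{winger contribution}).
\end{equation*}
I would pin down the precise coefficients by a careful bookkeeping of which edges feed into $A$-type and which into $B$-type vertices in the next row (the two neighbour-edges between consecutive row-$n$ vertices fuse into one $A$-vertex, the rest become $B$-vertices), and by tracking the two winger labels, which themselves satisfy a simple recursion because a left winger's label equals its parent left winger's label. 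This is the step I expect to be the main obstacle: the inhomogeneous winger terms must be eliminated, and one has to be careful that the ``$+1$'' type corrections present in the count recurrences are here replaced by winger-label terms that are \emph{not} constant.

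Once the coupled system is in hand, the elimination is mechanical: I would invoke (a suitable two-sequence elimination lemma — the same device as Lemma~\ref{lemma:2seq} used for Theorem~\ref{theorem:numvertex4q}, applied after first absorbing the winger terms, which themselves obey a low-order recurrence, so that the combined system is still captured by a single homogeneous ternary recurrence). Concretely, writing the system in matrix form $\mathbf{x}_{n+1}=M\mathbf{x}_n+\mathbf{w}_n$ and noting that $\mathbf{w}_n$ is itself annihilated by a factor dividing the eventual characteristic polynomial, the Cayley--Hamilton theorem for the enlarged companion matrix yields $x_n=qx_{n-1}-(q+1)x_{n-2}+2x_{n-3}$ for each of $\hat a_n,\hat b_n,\hat s_n$ once $n\ge4$. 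I would verify the claimed characteristic polynomial factors as $x^3-qx^2+(q+1)x-2=(x-2)(x^2-(q-1)x+1)$, so that its zeros are $\hat\alpha_q,\hat\beta_q$ and $\gamma=2$; the presence of the root $2$ is the expected footprint of Pascal's classical row sum $2^n$, exactly as the paragraph preceding the theorem anticipates.

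For the explicit formulae, the argument is identical in spirit to the end of the previous proof: since the minimal recurrence has characteristic roots $\hat\alpha_q,\hat\beta_q,2$, each of $\hat a_n,\hat b_n,\hat s_n$ is a linear combination $C_1\hat\alpha_q^{\,n}+C_2\hat\beta_q^{\,n}+C_3\cdot 2^n$ — but one checks that the $2^n$-coefficient vanishes and is replaced by the constant term $+2$ or $-2$, consistent with the stated closed forms (the constant, rather than geometric, tail is what makes these sums only ``related to'' $2^n$ and not equal to it). I would then determine $C_1,C_2$ from the initial values $x_1,x_2,x_3$ by solving a $2\times 2$ linear system (after subtracting off the known constant term), landing on the coefficients displayed with $\sqrt D$, $D=q^2-2q-7$; the denominator $q^2-2q-7$ is precisely $\hat\alpha_q\hat\beta_q\cdot(\hat\alpha_q-2)(\hat\beta_q-2)$-type resultant data, so it is forced. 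Finally, as before, the formula for $\hat s_n$ need not be derived independently: it follows by adding the closed forms for $\hat a_n$ and $\hat b_n$ and using $\hat s_n=\hat a_n+\hat b_n$, which also serves as an arithmetic check on the coefficients. The only genuinely delicate point remains the first paragraph — getting the coupled recurrence with its winger corrections exactly right — after which everything is routine linear algebra over $\mathbb{Q}(\sqrt D)$.
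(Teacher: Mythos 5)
Your overall strategy (coupled first--order recurrences for $\hat a_n,\hat b_n$, elimination via Lemma~\ref{lemma:2seq}, then solving for the coefficients of the explicit formulae) is exactly the paper's route, but the execution contains concrete errors. First, the factorization you propose to ``verify'' is false: $x^3-qx^2+(q+1)x-2$ does \emph{not} equal $(x-2)\bigl(x^2-(q-1)x+1\bigr)$ (expand: the latter is $x^3-(q+1)x^2+(2q-1)x-2$). The correct factorization is $(x-1)\bigl(x^2-(q-1)x+2\bigr)$, so the third root is $1$, not $2$, and $\hat\alpha_q\hat\beta_q=2$ with discriminant $(q-1)^2-8=q^2-2q-7=D$ as stated in the theorem; your quadratic factor would give discriminant $q^2-2q-3\ne D$. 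This matters downstream: your claim that the general solution is $C_1\hat\alpha_q^n+C_2\hat\beta_q^n+C_3\cdot 2^n$ ``but the $2^n$-coefficient vanishes and is replaced by the constant term'' is incoherent --- a constant term lies in the solution space only because $1$ is a characteristic root, and the displayed $\pm2$ are precisely the coefficients of $1^n$. One cannot ``replace'' a basis solution by another.

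Second, the pieces you leave as placeholders are the actual content. The winger labels are identically $1$ (the shortest path along the border is unique), so the inhomogeneous corrections are \emph{constant}, contrary to your worry; there is no need for an enlarged companion matrix. Concretely, row $n$ has $s_n-1$ adjacent pairs, each spawning one type-$A$ vertex labelled by the sum of the pair, and summing over pairs counts each interior label twice and each winger label once, giving $\hat a_{n+1}=2\hat a_n+2\hat b_n+2$; the $B$-count bookkeeping gives $\hat b_{n+1}=(q-4)\hat a_n+(q-3)\hat b_n$ with no winger term. Finally, the wingers are counted separately from types $A$ and $B$, so the correct identity is $\hat s_n=\hat a_n+\hat b_n+2$, not $\hat s_n=\hat a_n+\hat b_n$ (your version already contradicts $\hat s_1=2$, $\hat a_1=\hat b_1=0$). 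With these corrections, Lemma~\ref{lemma:2seq} applies verbatim and the rest is the routine linear algebra you describe.
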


\begin{proof}
By the construction rule of the triangle, together with the definition of the sequences above, we see that
\begin{equation*}
\hat{a}_{n+1}=2\hat{a}_n+2\hat{b}_n+2,\qquad \hat{b}_{n+1}=(q-4)\hat{a}_n+(q-3)\hat{b}_n,
\end{equation*}
further
\begin{equation*}
\hat{s}_{n}=\hat{a}_n+\hat{b}_n+2.
\end{equation*}
Now we apply the machinery of the proof of Theorem \ref{theorem:numvertex4q} to obtain the homogenous ternary recurrence relation of this theorem. The common characteristic polynomial
$$
\hat{p}(x)=x^3-qx^2+(q+1)x-2=(x-1)(x^2-(q-1)x+2)
$$
of the three sequences has zeros $\hat{\alpha}_q$, $\hat{\beta}_q$ and $1$. A suitable linear combination of their $n^{th}$ power provide the explicit formulae of Theorem \ref{theorem:sumvertex4q}.
\end{proof}


\section{The hyperbolic Pascal triangle linked to $\{4,5\}$}\label{sec:geom} 

Figure \ref{fig:Pascal_layer5}, in part, shows the object of the forthcoming scrutiny. The elements of type $A$ are displayed in circle, type $B$ in square.
 
\begin{figure}[h!]
 \centering
 \includegraphics[width=0.83\linewidth]{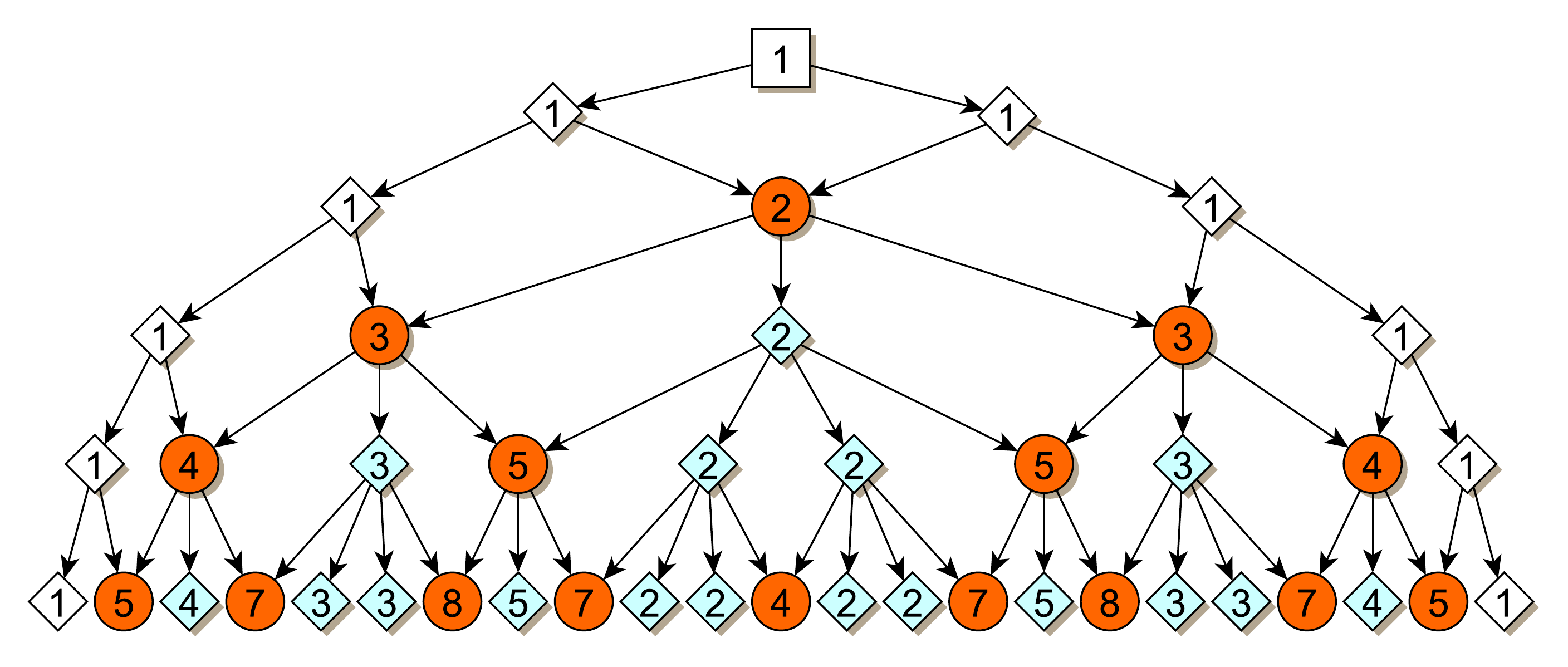}
 \caption{First rows of the hyperbolic Pascal triangle $\{4,5\}$}
 \label{fig:Pascal_layer5}
\end{figure}

First survey some basic properties of graph ${\cal I}_{4,5}$ and the corresponding Pascal triangle.
Obviously, there triangle possesses a vertical symmetry. In a given row, apart from the wingers, between two neighbour elements of type $A$ only $B$ or $BB$ are possible. Between two $B$'s either is nothing or one $A$ appears.

Now we state two consequences of Theorem \ref{theorem:numvertex4q}.

\begin{cor}
The recursive rule
\begin{equation}\label{r45}
x_n=4x_{n-1}-4x_{n-2}+x_{n-3}\qquad (n\ge4),
\end{equation}
holds for the three sequences $\{a_n\}$, $\{b_n\}$ and $\{s_n\}$,
the initial values are 
$$
a_1=0,\;a_2=1,\;a_3=2,\qquad b_1=0,\;b_2=0,\;b_3=1,\qquad s_1=2,\;s_2=3,\;s_3=5.
$$ 
Moreover, the explicit formulae
\begin{eqnarray*}
a_n&=&\left(-\frac{3}{2}+\frac{7}{10}\sqrt{5}\right)\alpha_5^n+\left(-\frac{3}{2}-\frac{7}{10}\sqrt{5}\right)\beta_5^n+1, \\
b_n&=&\left(1-\frac{2}{5}\sqrt{5}\right)\alpha_5^n+\left(1+\frac{2}{5}\sqrt{5}\right)\beta_5^n-1, \\
s_n&=&\left(-\frac{1}{2}+\frac{3}{10}\sqrt{5}\right)\alpha_5^n+\left(-\frac{1}{2}-\frac{3}{10}\sqrt{5}\right)\beta_5^n+1 
\end{eqnarray*}
are valid, where $\alpha_5=(3+\sqrt{5})/2$, $\beta_5=(3-\sqrt{5})/2$.
\end{cor}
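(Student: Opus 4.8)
The plan is to obtain the entire statement by specializing Theorem~\ref{theorem:numvertex4q} to the value $q=5$; no independent argument is needed. First I would substitute $q=5$ into the ternary recurrence (\ref{recur1}). Since $q-1=4$, the relation $x_n=(q-1)x_{n-1}-(q-1)x_{n-2}+x_{n-3}$ becomes exactly (\ref{r45}), and it holds simultaneously for $\{a_n\}$, $\{b_n\}$ and $\{s_n\}$ because Theorem~\ref{theorem:numvertex4q} asserts it for every admissible $q$. The initial values follow the same way: $a_1,a_2,a_3$ do not depend on $q$; $b_1=b_2=0$ and $b_3=q-4=1$; while $s_1=2$, $s_2=3$ and $s_3=q=5$.

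For the closed forms I would evaluate the constants appearing in (\ref{abcn4q}) at $q=5$. One gets $D=q^2-4q=5$, hence $\sqrt D=\sqrt5$, and the common characteristic polynomial specializes to $p(x)=x^3-4x^2+4x-1=(x-1)(x^2-3x+1)$, so its roots are $\alpha_5=(3+\sqrt5)/2$, $\beta_5=(3-\sqrt5)/2$ and $\gamma_5=1$, matching the general formulae for $\alpha_q,\beta_q,\gamma_q$. Then I would insert $q=5$ into the three coefficient pairs of (\ref{abcn4q}): for $a_n$ one has $(2-q)/2=-3/2$ and $(q^2-4q+2)/(2q(q-4))=7/10$; for $b_n$ one has $(q-3)/2=1$ and $(1-q)/(2q)=-2/5$; for $s_n$ one has $-1/2$ and $(q-2)/(2q(q-4))=3/10$. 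Plugging these in reproduces verbatim the three displayed expressions for $a_n$, $b_n$ and $s_n$.

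The only computation worth performing by hand is a consistency check that the specialized closed forms indeed return the stated initial values at $n=1,2,3$; this is immediate using $\alpha_5+\beta_5=3$ and $\alpha_5\beta_5=1$ (so that $\alpha_5-\beta_5=\sqrt5$), for example $a_1=-\tfrac{3}{2}(\alpha_5+\beta_5)+\tfrac{7}{10}\sqrt5\,(\alpha_5-\beta_5)+1=-\tfrac{9}{2}+\tfrac{7}{2}+1=0$, and similarly for the remaining cases. I do not expect any real obstacle: the corollary is a direct instantiation of an already-proved theorem, so the entire content is routine arithmetic bookkeeping with $q=5$, and the proof can be written in just a few lines.
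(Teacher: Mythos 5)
Your approach is exactly the paper's: the corollary is stated as a direct consequence of Theorem~\ref{theorem:numvertex4q} with no separate argument, and substituting $q=5$ into the recurrence, the initial values, and the coefficient expressions of (\ref{abcn4q}) is all that is required. Your arithmetic for the coefficients ($-3/2$, $7/10$; $1$, $-2/5$; $-1/2$, $3/10$) and for $D=5$, $\alpha_5=(3+\sqrt5)/2$, $\beta_5=(3-\sqrt5)/2$ is correct.

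One caveat: the specialization does \emph{not} reproduce the displayed formula for $s_n$ verbatim. Theorem~\ref{theorem:numvertex4q} gives the constant term $+2$ (the contribution of the root $\gamma_q=1$), and your own proposed consistency check, applied to $s_n$ rather than only to $a_1$, forces this: since $-\tfrac12(\alpha_5+\beta_5)+\tfrac{3}{10}\sqrt5\,(\alpha_5-\beta_5)=-\tfrac32+\tfrac32=0$, one gets $s_1=c$, so the constant must be $2$, not the $+1$ printed in the corollary. That $+1$ is a typo in the paper; carrying out the check for all three sequences (as you sketch but do not execute) would have caught it.
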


We note, that the ratio of the numbers of elements of two consecutive rows tends to $\alpha_5\approx2.618$, which means exponential growing (in Pascal's triangle this ratio tends to 1). 
The recurrence (\ref{r45}) and the initial values of the sequence $\{s_n\}$ give immediately the parity of the numbers $s_n$ as follows.

\begin{cor}
$$
s_n\equiv\left\{
\begin{array}{lll}
0 \;\;(\bmod \;2)& {\rm if} & n=3t+1, \\
1 \;\;(\bmod \;2)& {\rm if} & n\ne3t+1.
\end{array}
\right.
$$
\end{cor}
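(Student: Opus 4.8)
The plan is to prove the claimed congruence for $s_n$ purely by a mod $2$ analysis of the recurrence $s_n = 4s_{n-1} - 4s_{n-2} + s_{n-3}$ together with the initial data $s_1 = 2$, $s_2 = 3$, $s_3 = 5$. Reducing everything modulo $2$ kills the two middle terms, so the recurrence collapses to $s_n \equiv s_{n-3} \pmod 2$ for all $n \ge 4$. Hence the residue of $s_n$ modulo $2$ depends only on $n \bmod 3$, and the sequence of residues is purely periodic with period (dividing) $3$.

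The next step is simply to read off the three residues from the initial values: $s_1 = 2 \equiv 0$, $s_2 = 3 \equiv 1$, $s_3 = 5 \equiv 1 \pmod 2$. Since $n \equiv 1 \pmod 3$ corresponds to the class of $n=1$ (namely $n \in \{1,4,7,\dots\} = \{3t+1\}$), we get $s_n \equiv 0 \pmod 2$ exactly when $n = 3t+1$, and $s_n \equiv 1 \pmod 2$ when $n \equiv 2$ or $n \equiv 0 \pmod 3$, i.e.\ when $n \ne 3t+1$. This is precisely the assertion of the corollary.

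Strictly speaking one should note that the recurrence as stated in \eqref{r45} is only asserted for $n \ge 4$, so the identification $s_n \equiv s_{n-3}$ is valid for $n \ge 4$; a one-line induction on $t$ (base cases $t = 0$, giving $s_1, s_2, s_3$, already checked by hand) then extends the conclusion to all $n \ge 1$. There is essentially no obstacle here: the only thing to be careful about is the indexing convention, namely confirming that the residue class $n \equiv 1 \pmod 3$ is the one carrying the even values, which the computation $s_1 \equiv 0$ settles. The whole argument is a few lines and needs no appeal to the explicit closed form for $s_n$.
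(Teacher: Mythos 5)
Your proof is correct and is exactly the argument the paper intends: the corollary is stated immediately after the remark that ``the recurrence (\ref{r45}) and the initial values of the sequence $\{s_n\}$ give immediately the parity,'' i.e.\ reduce $s_n=4s_{n-1}-4s_{n-2}+s_{n-3}$ modulo $2$ to get $s_n\equiv s_{n-3}\pmod 2$ and read off the residues from $s_1=2$, $s_2=3$, $s_3=5$. Your added care about the $n\ge4$ range of the recurrence and the indexing of the residue class $n\equiv1\pmod3$ is fine but changes nothing substantive.
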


\subsection{Alternating sum in the hyperbolic Pascal triangle ${\cal I}_{4,5}$}\label{subsec:geom} 

In Theorem \ref{theorem:sumvertex4q} the sum of the elements of a row has been obtained. In case of triangle ${\cal I}_{4,5}$ we are able to determine the alternating sums. At the end of this subsection we will replace the weights 1 and $-1$ by arbitrary weights $v$ and $w$. We remark that such weighted sums under more general conditions in one generalization of Pascal's triangle have been examined by \cite{A}, where the direction of the summary was arbitrary.

\begin{theorem}
$$
\widetilde{s}_n=\sum_{i=0}^{s_n-1}(-1)^i\,\binomhh{n}{i}=
\left\{
\begin{array}{lll}
0 & {\rm if} & n=3t+1, \; n\ge1,\\
2 & {\rm if} & n\ne3t+1, \;n\ge5,
\end{array}
\right.
$$
further $\widetilde{s}_0=1$, $\widetilde{s}_2=0$, $\widetilde{s}_3=-2$. 
\end{theorem}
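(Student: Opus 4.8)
\emph{Plan of proof.} The statement really has two faces. When $n=3t+1$ the $n$th row has an even number $s_n$ of entries — recall from the preceding corollary that $s_n$ is even precisely for $n\equiv 1\pmod 3$ — and since the triangle ${\cal I}_{4,5}$ is symmetric about its vertical axis, the $i$th and $(s_n-1-i)$th entries of the row are equal while $i$ and $s_n-1-i$ have opposite parity; hence the alternating sum cancels in pairs and $\widetilde{s}_n=0$. This already disposes of $\widetilde{s}_1,\widetilde{s}_4,\dots$, and the small values $\widetilde{s}_0=1,\ \widetilde{s}_2=0,\ \widetilde{s}_3=-2$ are read directly off Figure~\ref{fig:Pascal_layer5}. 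So the substance of the proof is to show $\widetilde{s}_n=2$ for $n\not\equiv 1\pmod 3$, $n\ge 5$.

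The engine is a ``differentiation rule''. Since every entry of row $n+1$ is the sum of the labels $\ell$ of its one or two ascendants in row $n$, summing over parents gives $\widetilde{s}_{n+1}=\sum_{u\in\,\text{row }n}\ell(u)\bigl(\sum_{v\text{ child of }u}(-1)^{\mathrm{pos}(v)}\bigr)$, and as the children of $u$ occupy a block of consecutive positions, the inner sum is $0$ when $u$ has an even number of children and $\pm 1$ otherwise. This is exactly where $q=5$ is used: wingers have $2$ children and type-$B$ vertices have $q-1=4$, both even, whereas type-$A$ vertices have $q-2=3$, odd. Hence
\[
\widetilde{s}_{n+1}=\sum_{w\ \text{of type }A\ \text{in row }n}(-1)^{p_w}\,\ell(w),
\]
where $p_w$ is the position in row $n+1$ of the first child of $w$. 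To pin down the parities $p_w$ I would use the explicit way rows grow — each vertex $x$ is expanded into its block of private children ($W\mapsto W$, $A\mapsto B$, $B\mapsto BB$) with a type-$A$ ``merge'' inserted between consecutive blocks — together with the structural shape of a row (internally ``$A$, then $B$ or $BB$, then $A$, \dots''). In the position count a winger or a type-$B$ vertex contributes an odd number of new slots (with the shared-child convention) and a type-$A$ vertex an even number; carrying this out and recording which level each quantity really lives on, an Abel-type rearrangement (in which a running sign flips at every non-$B$ vertex and is frozen at every $B$) should give
\[
\widetilde{s}_{n+1}=-1-(-1)^{a_{n-1}}-2\,\Sigma_{n-1},\qquad
\Sigma_{n-1}:=\sum_{w\ \text{of type }B\ \text{in row }n-1}\varepsilon(w)\,\ell(w)
\]
with $\varepsilon(w)\in\{\pm1\}$ the value of that running sign at $w$.

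The crux is to evaluate $\Sigma_{n-1}$. The type-$B$ vertices of row $n-1$ lie inside the expanded blocks coming from row $n-2$, so running the same bookkeeping once more splits $\Sigma_{n-1}$ into (essentially) the full alternating sum of row $n-2$ plus an ``alternating sum over type-$B$ vertices'', namely $\Sigma_{n-1}=\widetilde{s}_{n-2}-1+(-1)^{s_{n-2}}+T_{n-2}$ where $T_m:=\sum_{w\ \text{of type }B\ \text{in row }m}(-1)^{\mathrm{pos}(w)}\ell(w)$. The clean miracle that makes everything collapse is the identity $T_m=-\widetilde{s}_m$ for all $m\ge 1$: pushing $T_m$ one level down turns it into exactly the sum produced by the differentiation rule applied to $\widetilde{s}_m$. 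I would prove this identity first, as a separate lemma, so the two reductions can be chained; granting it, the $\widetilde{s}_{n-2}$ terms cancel, $\Sigma_{n-1}=(-1)^{s_{n-2}}-1$, and therefore $\widetilde{s}_{n+1}=1-(-1)^{a_{n-1}}-2(-1)^{s_{n-2}}$ for $n+1\ge 5$.

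It remains to insert the parities of $a_m$ and $s_m$. Both are periodic modulo $3$: from the recurrence~(\ref{r45}) with its initial data one reads off $a_m\equiv a_{m-3}$ and $s_m\equiv s_{m-3}\pmod 2$, so $a_m$ is odd iff $m\equiv 2\pmod 3$, and $s_m$ is even iff $m\equiv 1\pmod 3$. A short case analysis on $n+1\bmod 3$ then yields $\widetilde{s}_{n+1}=0$ when $n+1\equiv 1$ and $\widetilde{s}_{n+1}=2$ when $n+1\equiv 0$ or $2$, which together with the symmetry case and the three exceptional values finishes the proof. The main obstacle throughout is combinatorial rather than algebraic: keeping precise control, through both reductions, of where the descendants of each vertex land — hence of the parities $p_w$ and of the running sign $\varepsilon$ — and it is the substitution description of row growth plus the $A/B$ pattern of rows that tame this; the identity $T_m=-\widetilde{s}_m$ is what then produces the remarkably simple closed form.
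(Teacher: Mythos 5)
Your proof is correct, and it takes a genuinely different route from the paper's. The paper splits $\widetilde{s}_n=\widetilde{s}_n^{(A)}+\widetilde{s}_n^{(B)}+2$ and works \emph{three} rows at a time: reading off figures the contribution of a single $A$-type, $B$-type or winger vertex to the two subsums three rows below, it derives the coupled system $\widetilde{s}_{n+3}^{(A)}=-4\widetilde{s}_n^{(A)}-8\widetilde{s}_n^{(B)}-6$, $\widetilde{s}_{n+3}^{(B)}=2\widetilde{s}_n^{(A)}+4\widetilde{s}_n^{(B)}+2$, whose matrix has trace and determinant $0$, so Lemma \ref{lemma:2seq} forces both subsums to stabilize from row $6$ on, and the value $2$ is read off rows $6$ and $8$. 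You descend \emph{one} row at a time. Your ``differentiation rule'' is exactly right and is precisely where $q=5$ enters: wingers and type-$B$ vertices have an even number of children occupying consecutive positions and drop out. I checked the three identities you assert but do not derive: writing the position of the merge vertex over the pair $(v_j,v_{j+1})$ of row $n-1$ as $j{+}1$ plus the number of private children of $v_1,\dots,v_j$, an Abel summation does give $\widetilde{s}_{n+1}=-1-(-1)^{a_{n-1}}-2\Sigma_{n-1}$ with your running sign equal to $(-1)^{\#\{i\le j:\,v_i\ \mathrm{of\ type}\ A\}}$; pushing $\Sigma_{n-1}$ down one more level gives $\Sigma_{n-1}=\widetilde{s}_{n-2}-1+(-1)^{s_{n-2}}+T_{n-2}$; and $T_m=-\widetilde{s}_m$ follows in one line, since the unique private ($B$-type) child of a type-$A$ vertex sits immediately to the right of that vertex's leftmost (merge) child, shifting every sign in the differentiation-rule expression for $\widetilde{s}_m$ by one position. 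Combined with the period-$3$ parities of $a_m$ and $s_m$, this yields the closed form $\widetilde{s}_{n+1}=1-(-1)^{a_{n-1}}-2(-1)^{s_{n-2}}$, which I verified against Table \ref{table:altsum_of_values} for $n+1\ge 4$. Your approach buys a figure-free argument and an explicit closed form in terms of row-count parities; the paper's buys a structural explanation of the eventual constancy via a degenerate linear system. The one thing still owed on your side is the actual positional bookkeeping behind the first two identities — it is routine given the substitution description of row growth, but it is the entire content of those steps and should not remain at the level of ``should give''.
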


\begin{table}[!ht]
  \centering \setlength{\tabcolsep}{0.4em}
\begin{tabular}{|c|r|r|r|r|r|r|r|r|r|r|r|r|r|}
  \hline
 $n$   &  0  &  1 & 2 & 3 & 4 & 5 & 6  & 7  & 8   & 9   & 10  & 11   & 12    \\ \hline \hline
 $\widetilde{s}_n^{(A)}$ &  0  &  0 & -2 & -6 & 0 & 2 & 2 & 0 & 2 & 2 & 0 & 2  & 2  \\ \hline
 $\widetilde{s}_n^{(B)}$ &  0  &  0 & 0 & 2 & 0 & -2& -2 & 0 & -2 & -2 & 0& -2  & -2  \\ \hline
 $\widetilde{s}_n$ &  1  &  0 & 0 & -2 & 0 & 2 & 2  & 0  & 2   & 2   & 0   & 2    & 2      \\ \hline
 \end{tabular}
\caption{\emph{Alternating sums with $n\le12$}\label{table:altsum_of_values}}
\end{table}

\begin{proof}
By the vertical symmetry, the statement (i.e.~$\widetilde{s}_n=0$) is trivial if $n=3t+1$ since $s_{3t+1}$ is even. 

Assume now that $n$ is sufficiently large and $n\ne3t+1$. Thus the sum $\widetilde{s}_n$ consists of odd number of summands. Let $\widetilde{s}_n^{(A)}$ be the subsum of the alternate sum $\widetilde{s}_n$ what is restricted only to the elements of type $A$. Introducing $\widetilde{s}_n^{(B)}$ analogously, together with  $\binomh{n}{0}=\binomh{n}{s_n-1}=1$, clearly
$$
\widetilde{s}_n=\widetilde{s}_n^{(A)}+\widetilde{s}_n^{(B)}+2
$$
holds.

The backbone of the proof is to observe the influence of the elements of row $n$ to $\widetilde{s}_{n+3}$. We separate the contribution of each $\binomh{n}{i}$ to $\widetilde{s}_{n+3}$ individually, and then consider their superposition. In order to achieve the computations we will analyse three figures. Before, some useful notations are introduced.

Suppose that $x_A$ and $x_B$ denote the value of an element of type $A$ and $B$, respectively. Let their contribution to $\widetilde{s}_{n+3}$ be denoted by ${\cal H}_3(x_A)$ and ${\cal H}_3(x_B)$, respectively. Later it will be clear what we understand under contribution. Similarly, ${\cal H}_3(1)$ shows the contribution of a winger element of row $n$ (having value $1$). We also need the notations
${\cal H}_3^{(A)}(x_A)$ and ${\cal H}_3^{(B)}(x_A)$ for the contribution of the $A$ type element $x_A$ from the row $n$ to the alternate sum of the row $n+3$ restricted to the elements of type $A$ and $B$ in, respectively. Apparently,
$$
{\cal H}_3(x_A)={\cal H}_3^{(A)}(x_A)+{\cal H}_3^{(B)}(x_A).
$$
After then the meaning of ${\cal H}_3^{(A)}(x_B)$, ${\cal H}_3^{(B)}(x_B)$, moreover ${\cal H}_3^{(A)}(1)$ and ${\cal H}_3^{(B)}(1)$ are also clear. We even have
\begin{equation}\label{eq:febr}
{\cal H}_3(x_B)={\cal H}_3^{(A)}(x_B)+{\cal H}_3^{(B)}(x_B)\qquad{\rm and}\qquad {\cal H}_3(1)={\cal H}_3^{(A)}(1)+{\cal H}_3^{(B)}(1)+1.
\end{equation}

Put $\varepsilon=\pm1$. From Figure \ref{fig:HA_45} (where $x_A$ is shortened to $x$ for the simplicity), using the symmetry, we conclude
\begin{eqnarray*}
{\cal H}_3^{(A)}(x_A) &=& \ve\cdot(x+2x-3x-3x)\cdot2+\ve\cdot2x=-4\ve x=-4\ve x_A,\\
{\cal H}_3^{(B)}(x_A) &=& \ve\cdot(-x-x+x+2x+x-x)\cdot2=2\ve x=2\ve x_A.\end{eqnarray*}

\begin{figure}[h!]
 \centering
 \includegraphics[width=0.82\linewidth]{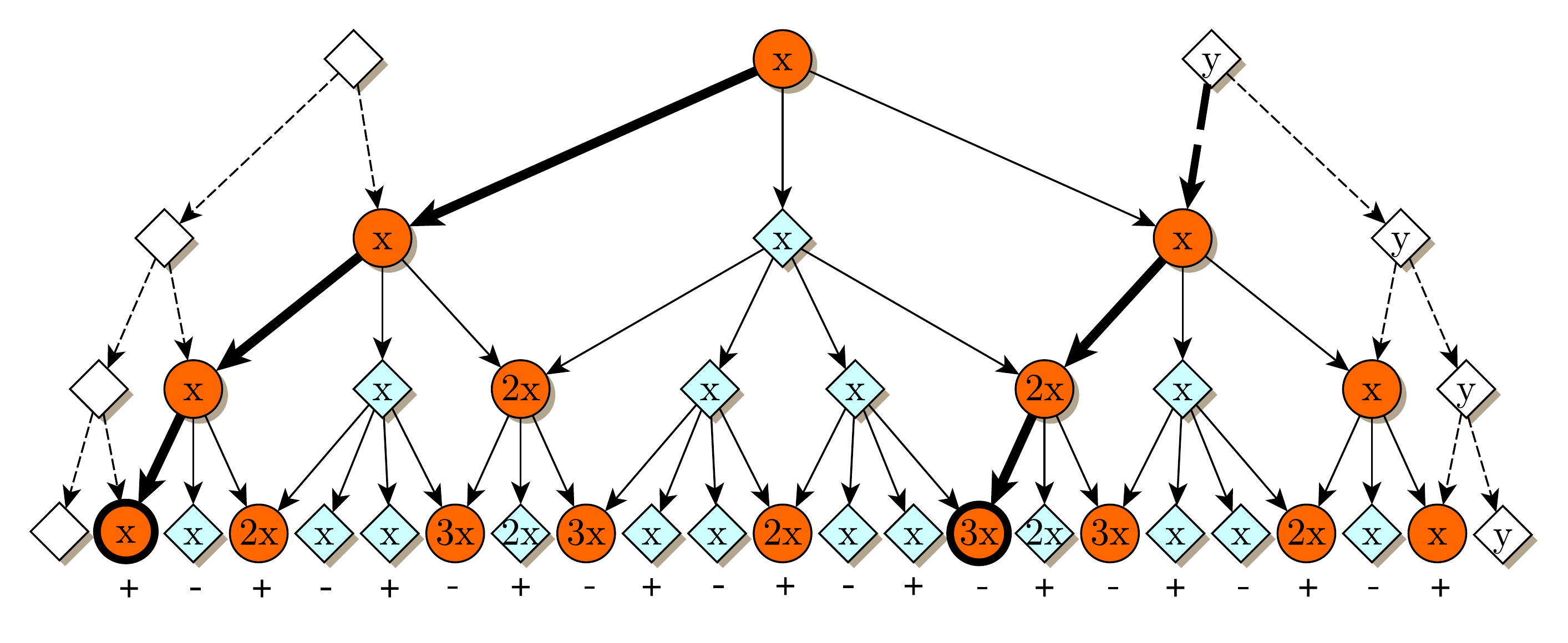}
 \caption{The influence ${\cal H}_3(x_A)$}
 \label{fig:HA_45}
\end{figure}

Thus ${\cal H}_3(x_A)=-2\ve x_A$. Assume that $x_A$, which is located in the row $n$, has the coefficient $\ve_1$ in the alternate sum of row $n$. Then the coefficient of the next element $y$ (see Figure \ref{fig:HA_45}) must be $-\ve_1$. It is crucial to observe, that there are further 12 elements ($x,2x,x,x,3x,2x,3x,x,x,2x,x,x$) between the leftmost elements of the influences of $x_A$ and $y$ in the $(n+3)^{th}$ row.  The change in the sign from $x_A$ to $y$ entails the same change in the sign from ${\cal H}_3(x_A)$ to ${\cal H}_3(y)$.  In other word, the signs of the alternate sum in row $n$ descent to the row $n+3$ in this manner.

Letting $\delta=\pm1$, a similar consequences come from Figure \ref{fig:HB_45} for an element of type $B$. More precisely,
\begin{eqnarray*}
{\cal H}_3^{(A)}(x_B) &=& \delta\cdot(x+2x-3x-3x+2x-3x)\cdot2=-8\delta x=-8\delta x_B,\\
{\cal H}_3^{(B)}(x_B) &=& \delta\cdot(-x-x+x+x+x-x-x+x)\cdot2+\delta\cdot2x=4\delta x=4\delta x_B.
\end{eqnarray*}

\begin{figure}[h!]
 \centering
 \includegraphics[width=0.99\linewidth]{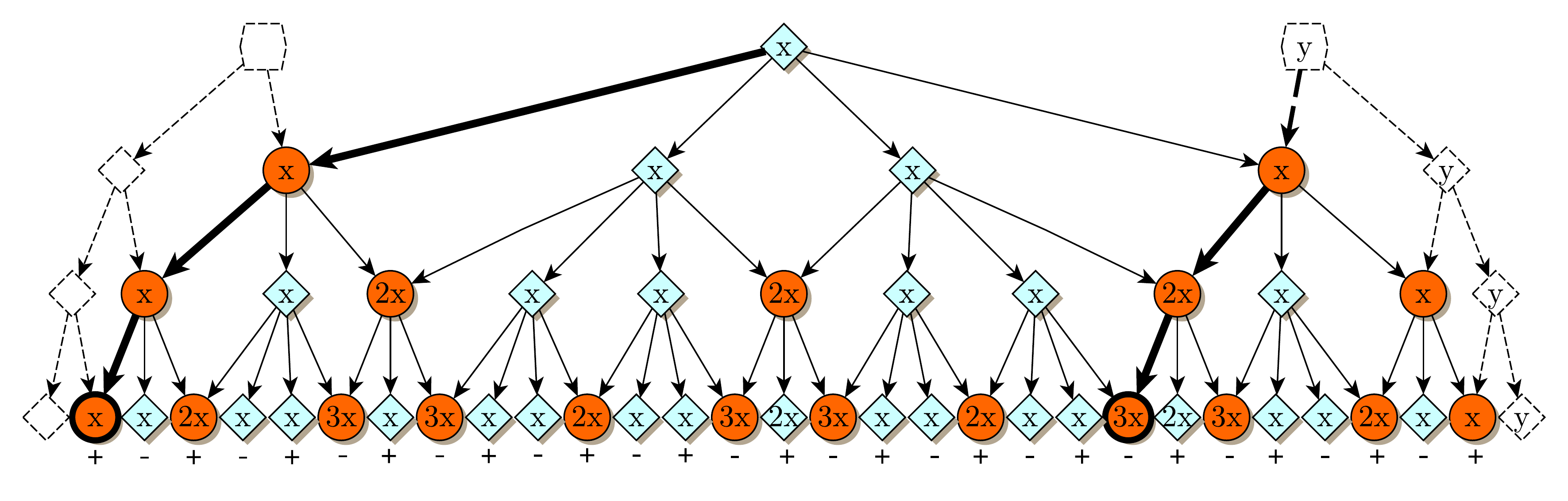}
 \caption{The influence ${\cal H}_3(x_B)$}
 \label{fig:HB_45}
\end{figure}

Hence ${\cal H}_3(x_B)=-4\delta x_B$. The inheritance of the sings of the alternate sum is also true here, by Figure \ref{fig:HB_45}, there are now 20 elements ($x,2x,x,\dots,2x,x,x$) between the leftmost elements of the influences of $x_A$ and $y$ in the $(n+3)^{th}$ row. 

Finally, Figure \ref{fig:H1_45} gives
\begin{eqnarray*}
{\cal H}_3^{(A)}(1) &=& -3-3+2+1=-3,\\
{\cal H}_3^{(B)}(1) &=& 2+1-1-1=1,
\end{eqnarray*}
and then (\ref{eq:febr}) implies ${\cal H}_3(1)=-1$. In this case, as $1$ is followed by $y$ in row $n$, the influence of $1$ is followed next by the influence of $y$.

\begin{figure}[h!]
 \centering
 \includegraphics[width=0.43\linewidth]{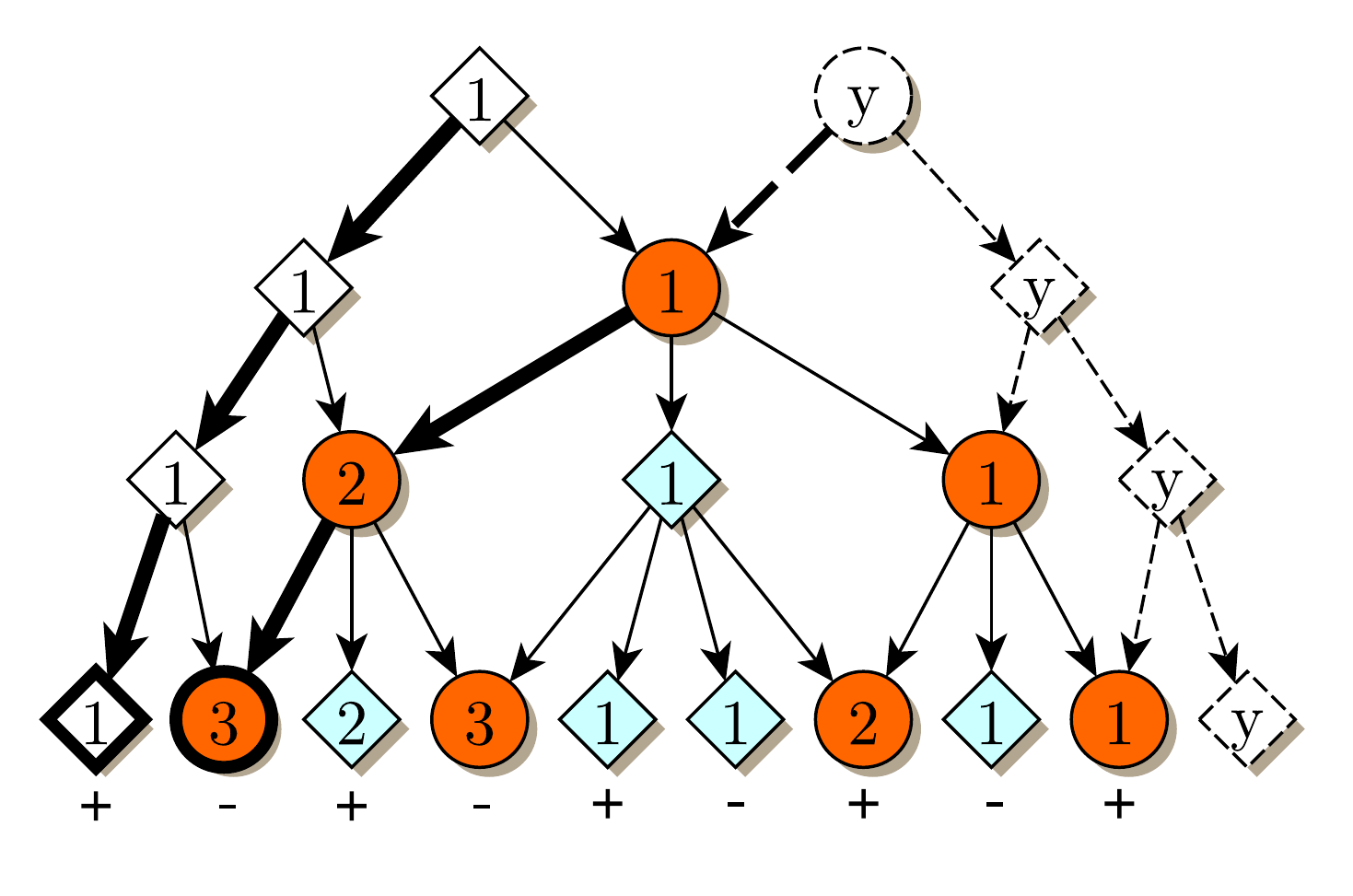}
 \caption{The influence ${\cal H}_3(1)$}
 \label{fig:H1_45}
\end{figure}

In the next step we derive the total influence of $\widetilde{s}_{n}^{(A)}$ (and then $\widetilde{s}_{n}^{(B)}$) to row $n+3$. Clearly, in $\widetilde{s}_{n}^{(A)}$ some elements have sign $-1$ (denoted by $x_A^{(-1)}$), the others have $1$ (denoted by $x_A^{(1)}$), so
$$
\widetilde{s}_{n}^{(A)}=\sum_{i=1}^{u_i}x_{A,i}^{(1)}\,+\,\sum_{j=1}^{u_j}(-1)x_{A,j}^{(-1)}.
$$
Thus
\begin{eqnarray*}
{\cal H}_3(\widetilde{s}_{n}^{(A)})&=&{\cal H}_3\left(\sum_{i=1}^{u_i}x_{A,i}^{(1)}\,+\,\sum_{j=1}^{u_j}(-1)x_{A,j}^{(-1)}\right)=\sum_{i=1}^{u_i}{\cal H}_3(x_{A,i}^{(1)})\,+\,\sum_{j=1}^{u_j}{\cal H}_3(-x_{A,j}^{(-1)}) \\
&=& \sum_{i=1}^{u_i}{\cal H}_3(x_{A,i}^{(1)})\,-\,\sum_{j=1}^{u_j}{\cal H}_3(x_{A,j}^{(-1)})\\
&=& \sum_{i=1}^{u_i}\left({\cal H}_3^{(A)}(x_{A,i}^{(1)})+{\cal H}_3^{(B)}(x_{A,i}^{(1)})\right)\,-\,\sum_{j=1}^{u_j}\left({\cal H}_3^{(A)}(x_{A,j}^{(-1)})+{\cal H}_3^{(B)}(x_{A,j}^{(-1)})\right)\\
&=& \left[\sum_{i=1}^{u_i}{\cal H}_3^{(A)}(x_{A,i}^{(1)})-\sum_{j=1}^{u_j}{\cal H}_3^{(A)}(x_{A,j}^{(-1)})\right]+\left[\sum_{i=1}^{u_i}{\cal H}_3^{(B)}(x_{A,i}^{(1)})-\sum_{j=1}^{u_j}{\cal H}_3^{(B)}(x_{A,j}^{(-1)})\right]\\
&=& \left[\sum_{i=1}^{u_i}(-4x_{A,i}^{(1)})-\sum_{j=1}^{u_j}(-4x_{A,j}^{(-1)})\right]+\left[\sum_{i=1}^{u_i}(2x_{A,i}^{(1)})-\sum_{j=1}^{u_j}(2x_{A,j}^{(-1)})\right]\\
&=& (-4)\left[\sum_{i=1}^{u_i}x_{A,i}^{(1)}-\sum_{j=1}^{u_j}x_{A,j}^{(-1)}\right]+2\left[\sum_{i=1}^{u_i}x_{A,i}^{(1)}-\sum_{j=1}^{u_j}x_{A,j}^{(-1)}\right]=\underbrace{-4\widetilde{s}_{n}^{(A)}}_{\rm{for}\;\rm{type}\; {\it A}}+\underbrace{2\widetilde{s}_{n}^{(A)}}_{\rm{for}\;\rm{type}\; {\it B}}\!\!\!\!.
\end{eqnarray*}
Similarly, we find
$$
{\cal H}_3(\widetilde{s}_{n}^{(B)})=\underbrace{-8\widetilde{s}_{n}^{(B)}}_{\rm{for}\;\rm{type}\; {\it A}}+\underbrace{4\widetilde{s}_{n}^{(B)}}_{\rm{for}\;\rm{type}\; {\it B}}\!\!\!\!.
$$
Hence
\begin{eqnarray*}
\widetilde{s}_{n+3}^{(A)}&=&-4\widetilde{s}_n^{(A)}-8\widetilde{s}_n^{(B)}+2\cdot(-3),\qquad (n\ge0),\\
\widetilde{s}_{n+3}^{(B)}&=&\;\;\;2\widetilde{s}_n^{(A)}+4\widetilde{s}_n^{(B)}+2\cdot1,\phantom{(-)}\qquad (n\ge0).
\end{eqnarray*}
Recalling Lemma \ref{lemma:2seq}, it implies $\widetilde{s}_{n+9}^{(A)}=\widetilde{s}_{n+6}^{(A)}$ and $\widetilde{s}_{n+9}^{(B)}=\widetilde{s}_{n+6}^{(B)}$. Since
$\widetilde{s}_6^{(A)}=\widetilde{s}_8^{(A)}=2$, further $\widetilde{s}_6^{(B)}=\widetilde{s}_8^{(B)}=-2$ (see Table \ref{table:altsum_of_values}) we conclude 
$\widetilde{s}_{n+6}^{(A)}=2$ and $\widetilde{s}_{n+6}^{(B)}=-2$. Thus
$$
\widetilde{s}_{n}=\widetilde{s}_n^{(A)}+\widetilde{s}_n^{(B)}+2=2
$$
holds for $n\ge6$. The smaller cases can be verified by hand, and the proof is complete.
\end{proof}

By the help of $\hat{s}_n$ and $\widetilde{s}_n$ we can easily determine the alternate sum with the arbitrary weights $v$ and $w$. 
\begin{cor}
$$
\widetilde{s}_{(v,w),n}=\sum_{i=0}^{s_n-1}\left(v{\delta_{0, \,i\bmod2}}+w{\delta_{1,\, i\bmod2}}\right)\binomhh{n}{i}=\frac{\hat{s}_n+\widetilde{s}_n}{2}v+\frac{\hat{s}_n-\widetilde{s}_n}{2}w=
\frac{v+w}{2}\hat{s}_n+\frac{v-w}{2}\widetilde{s}_n,
$$
where $\delta_{j,i}$ is the Kronecker delta.
\end{cor}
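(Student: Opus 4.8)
The plan is to reduce the identity to elementary parity bookkeeping, so that it follows at once from the two quantities already at our disposal. First I would split the index set $\{0,1,\dots,s_n-1\}$ into its even and odd members and put
$$
E_n=\sum_{\substack{0\le i\le s_n-1\\ i\ \mathrm{even}}}\binomhh{n}{i},\qquad
O_n=\sum_{\substack{0\le i\le s_n-1\\ i\ \mathrm{odd}}}\binomhh{n}{i}.
$$
Since $\delta_{0,\,i\bmod 2}$ equals $1$ exactly when $i$ is even and $\delta_{1,\,i\bmod 2}$ equals $1$ exactly when $i$ is odd, the weighted sum $\widetilde{s}_{(v,w),n}$ is literally $vE_n+wO_n$; no further manipulation of the defining sum is needed.

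Next I would identify $E_n$ and $O_n$ in terms of the already computed sums. By the definition of $\hat{s}_n$ as the sum of \emph{all} elements of row $n$ we have $\hat{s}_n=E_n+O_n$, and by the definition of the alternating sum $\widetilde{s}_n=\sum_{i=0}^{s_n-1}(-1)^i\binomhh{n}{i}$ we have $\widetilde{s}_n=E_n-O_n$. This is a $2\times2$ linear system in the unknowns $E_n,O_n$ with the obvious solution $E_n=(\hat{s}_n+\widetilde{s}_n)/2$ and $O_n=(\hat{s}_n-\widetilde{s}_n)/2$. Substituting back and collecting the coefficients of $v$ and $w$ gives
$$
\widetilde{s}_{(v,w),n}=vE_n+wO_n=\frac{\hat{s}_n+\widetilde{s}_n}{2}\,v+\frac{\hat{s}_n-\widetilde{s}_n}{2}\,w=\frac{v+w}{2}\hat{s}_n+\frac{v-w}{2}\widetilde{s}_n,
$$
which is exactly the claimed formula.

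There is essentially no obstacle here: the only point requiring a (trivial) check is that $\{0,1,\dots,s_n-1\}$ is partitioned by parity with no overlap or omission, so that $E_n+O_n$ genuinely recovers the full row sum $\hat{s}_n$ and $E_n-O_n$ the alternating sum $\widetilde{s}_n$; this is immediate. One may add the remark that both $\hat{s}_n$ (from Theorem~\ref{theorem:sumvertex4q}) and $\widetilde{s}_n$ (from the preceding theorem) have explicit closed forms, so the right-hand side is completely explicit as a function of $n$, $v$, $w$; but this observation is not needed for the identity, which holds for arbitrary weights $v,w$ and every $n\ge 0$.
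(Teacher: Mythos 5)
Your proof is correct and is exactly the elementary parity decomposition the paper has in mind (the corollary is stated without proof as an immediate consequence of $\hat{s}_n$ and $\widetilde{s}_n$). The linear system $E_n+O_n=\hat{s}_n$, $E_n-O_n=\widetilde{s}_n$ and the substitution $vE_n+wO_n$ give the claimed identity with nothing further to check.
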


\subsection{Location of elements type $A$ and $B$ in the hyperbolic Pascal triangle ${\cal I}_{4,5}$}

The aim of this subsection is to give the location of the elements type $A$ and $B$ within rows of the hyperbolic Pascal triangle belonging to the graph ${\cal I}_{4,5}$.
First we provide a description of the structure of ${\cal I}_{4,5}$ concentrating only on the type of the elements and not considering their value labelled (apart from Lemma \ref{lemma:rep}). In other words, we examine only the pattern displayed by the elements of type $A$ and $B$ (see Figure \ref{fig:AB}). 
\begin{figure}[h!]
 \centering
 \includegraphics[width=0.70\linewidth]{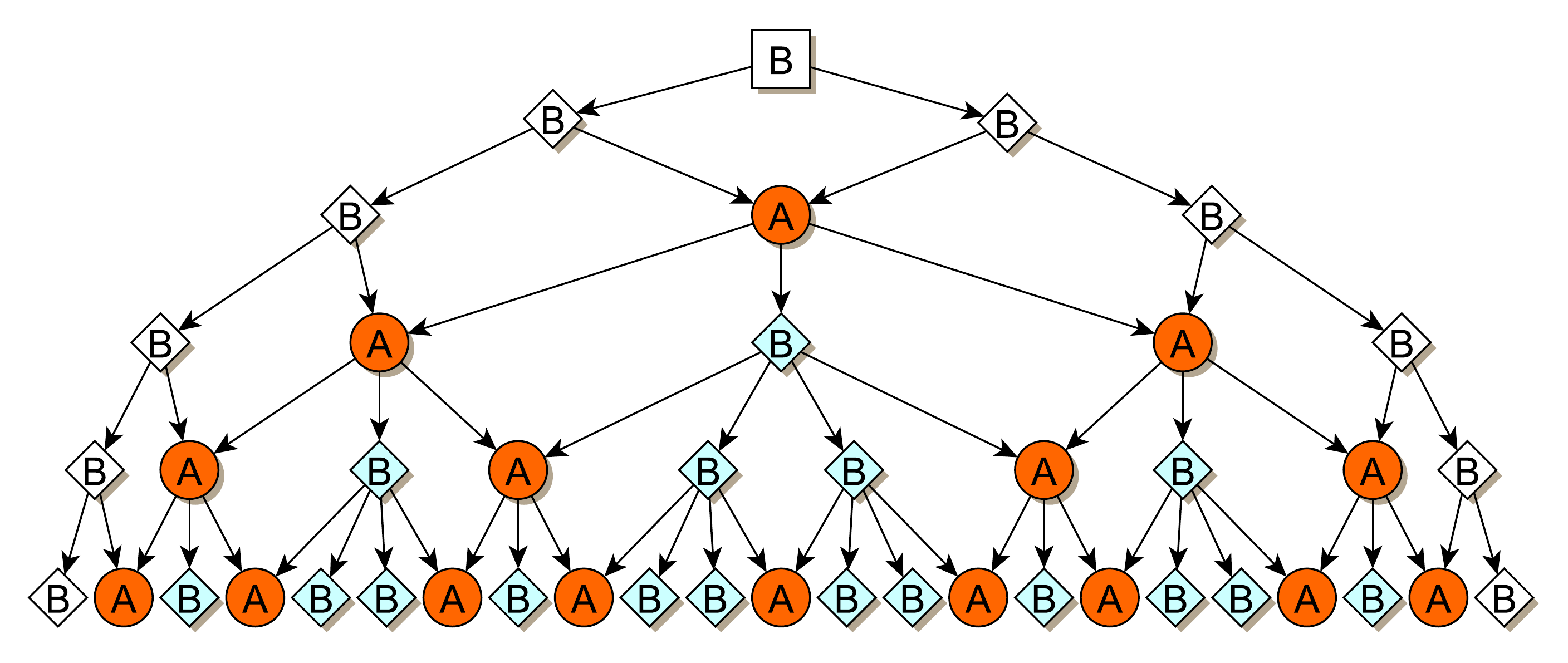}
 \caption{Pattern provided by the types $A$ and $B$ in ${\cal I}_{4,5}$}
 \label{fig:AB}
\end{figure}
Then we construct a sequence which involves the information about the rows in recursive manner. Note that here we always consider the winger elements of ${\cal I}_{4,5}$ as type $B$, which facilitates the general handling.

Recall, that ${\cal I}_{4,5}$ possesses a vertical symmetry. Now we collect some other important facts.

\begin{lemma}\label{lemma:rep}
Let $n=3k\ge3$. In row $n$ there is a central element of type $B$ (possessing four descendants), which is labelled with value $2^k$. This vertex is a root of a subgraph isomorphic to ${\cal I}_{4,5}$. 
\end{lemma}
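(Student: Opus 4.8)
The plan is to prove the three assertions of Lemma~\ref{lemma:rep} --- that row $n=3k$ has a central vertex of type $B$, that this vertex carries the label $2^k$, and that it roots a subgraph isomorphic to ${\cal I}_{4,5}$ --- essentially by induction on $k$, exploiting the vertical symmetry already noted. For the \emph{existence and type} of the central element, I would first observe that by the vertical symmetry of ${\cal I}_{4,5}$, the middle of row $n$ is either a single vertex or a pair of mirror-image vertices. Since $s_n$ is odd exactly when $n\ne 3t+1$ (Corollary~2), and in particular $s_{3k}$ is odd, the centre of row $3k$ is a single vertex; call it $C_k$. It remains to check that $C_k$ has type $B$ with four descendants. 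Here I would use the local structural observation stated just before the lemma: apart from the wingers, between two neighbour type-$A$ elements only $B$ or $BB$ occurs, and between two $B$'s there is either nothing or a single $A$. Combined with the vertical symmetry, a central vertex of type $A$ would force a symmetric type-$B$ neighbour on each side, i.e.\ a configuration $B\,A\,B$ straddling the axis; tracking this configuration forward one row (using the growth rule: neighbour edges of neighbour vertices meet in a type-$A$ vertex) shows the centre of the \emph{next} row would then be type $A$ again, which (pushing three rows down) contradicts $s_{3(k+1)}$ being odd with a type-$A$ centre --- or more cleanly, one shows directly from the meeting rule that the descendant of a centrally placed vertex whose two halves are symmetric is type $B$. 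I expect the cleanest route is to set up the three-row transition matrix for ``centre type'' and read off the period-$3$ pattern $B,\ ?,\ ?$ starting from row $3$, anchored by the base case in Figure~\ref{fig:Pascal_layer5}.

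For the \emph{label} $2^k$, induction on $k$ is the natural tool. The base case $k=1$: row $3$ has central element of value $2$, visible in Figure~\ref{fig:Pascal_layer5}. For the inductive step, I would argue that the central element $C_{k+1}$ of row $3(k+1)$ is obtained from $C_k$ by exactly the local three-row propagation depicted in Figures~\ref{fig:HA_45}--\ref{fig:H1_45} (the same blocks used in the alternating-sum proof), restricted to the axis. Concretely, $C_k$ is type $B$ with four descendants; tracing the sub-triangle hanging below $C_k$ for three levels and reading the central label, the label-summation rule (a label is either inherited or the sum of its two ascendants) yields that the central value triples-and-... --- more precisely, I would compute that the centre multiplies by exactly $2$ per three rows, so $C_{k+1}$ has value $2\cdot 2^k=2^{k+1}$. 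The third claim, that $C_k$ roots a copy of ${\cal I}_{4,5}$, then follows almost formally: the sub-mosaic of $\{4,5\}$ ``below'' the central vertex $C_k$ is again an unbounded region of the same type bounded in the same way, so by the very construction of ${\cal I}_{4,5}$ from $\{4,q\}$ in Section~\ref{sec:hpt4q} the induced subgraph rooted at $C_k$ is isomorphic to ${\cal I}_{4,5}$; and the labels of that subgraph, being counts of shortest paths from $C_k$, relate to the labels of ${\cal I}_{4,5}$ by a multiplicative factor, which is exactly the label of $C_k$, namely $2^k$.

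The main obstacle, I expect, is not the label arithmetic but pinning down rigorously that the centre of row $3k$ really has \emph{type} $B$ and exactly four descendants, and that the region below it is genuinely a faithful copy of the whole structure rather than merely ``locally similar''. The delicate point is that ${\cal I}_{4,5}$ was defined via shortest-path counts in a specific unbounded part ${\cal P}$ of the hyperbolic mosaic; to say $C_k$ roots an isomorphic copy one must identify an isometry of the hyperbolic plane carrying $({\cal P}, V_0)$ onto the sub-region hanging below $C_k$ with $C_k$ in the role of $V_0$, and check it respects the row structure. I would handle this by noting that in $\{4,5\}$ a degree-$5$ vertex with two ascendants-pattern matching that of $V_0$-at-distance-$3$ looks locally (and hence, by homogeneity of the regular mosaic, globally for the unbounded cone it spans) exactly like $V_0$; the vertical symmetry axis of ${\cal I}_{4,5}$ maps to the vertical symmetry axis of the sub-triangle. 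Once that identification is in place, everything else --- type of $C_k$, its four descendants, the relabelling factor $2^k$ --- drops out of the corresponding facts about $V_0$ and row $0$ of ${\cal I}_{4,5}$. A short verification by hand for $k=1,2$ against Figure~\ref{fig:Pascal_layer5} anchors the induction.
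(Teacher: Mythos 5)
Your overall strategy---induction on $k$ via a three-row central block showing that the centre stays type $B$ and doubles in value---is the same as the paper's, which simply reads this block off a figure: the central type-$B$ vertex of row $3k$ with value $y$ has two inner type-$B$ descendants of value $y$ in row $3k+1$, whose edges meet in a central type-$A$ vertex of value $2y$ in row $3k+2$, whose middle descendant is the type-$B$ centre of row $3(k+1)$ with value $2y$. Your parity observation (that $s_{3k}$ is odd, so the centre is a single vertex) is a nice supplementary anchor, but your first attempt at establishing type $B$ (the $B\,A\,B$ contradiction ``pushed three rows down'') is circular as written; the clean route is exactly the explicit three-row computation you defer to, so that computation has to be carried out---it is the whole content of the inductive step.

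Where you genuinely diverge is the subgraph claim, and there is a gap there. The paper's argument is purely combinatorial: the boundary (``leg'') vertices of the subgraph hanging below the central vertex are type $B$, hence have a unique ascendant, and that ascendant lies inside the subgraph; consequently no label information enters from outside, every shortest path from $V_0$ to a subgraph vertex passes through the central vertex, and the labels of the subgraph are exactly $2^k$ times the corresponding labels of ${\cal I}_{4,5}$. Your route via an isometry of the mosaic carrying $({\cal P},V_0)$ onto the cone below the central vertex is a legitimate and more conceptual way to obtain the structural (graph) isomorphism, and you correctly flag the identification of that isometry as the delicate point. But the isometry alone does not yield the label statement: the labels in the ambient triangle count shortest paths from $V_0$, not from the central vertex, so the claimed multiplicative factor $2^k$ requires precisely the ``no outer input'' observation above (equivalently, that every shortest $V_0$-path to a vertex of the subgraph funnels through its root). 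As written you assert this factorization without justification; adding that one observation closes the argument.
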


\begin{proof}
We use the technique of induction. The first structural repetition of ${\cal I}_{4,5}$ can be observed in Figure \ref{fig:Pascal_layer5}, starting by the middle element 2 of row 3. Further, Figure \ref{fig:centralelement} makes it possible to deduce, that if the central element of type $B$ in row $3k$ has value $y$, then the central element in row $3(k+1)$ does $2y$, moreover its type is $B$ again. Finally, a suitably separated subgraph of ${\cal I}_{4,5}$ starting at the central element of row $3k$ has the following property: the leftmost (and the rightmost) elements has type $B$ (with four descendants, among them only two are included), consequently the excluded part of ${\cal I}_{4,5}$ does not affect the included one (no outer input).
\end{proof}

\begin{figure}[h!]
 \centering
 \includegraphics[width=0.55\linewidth]{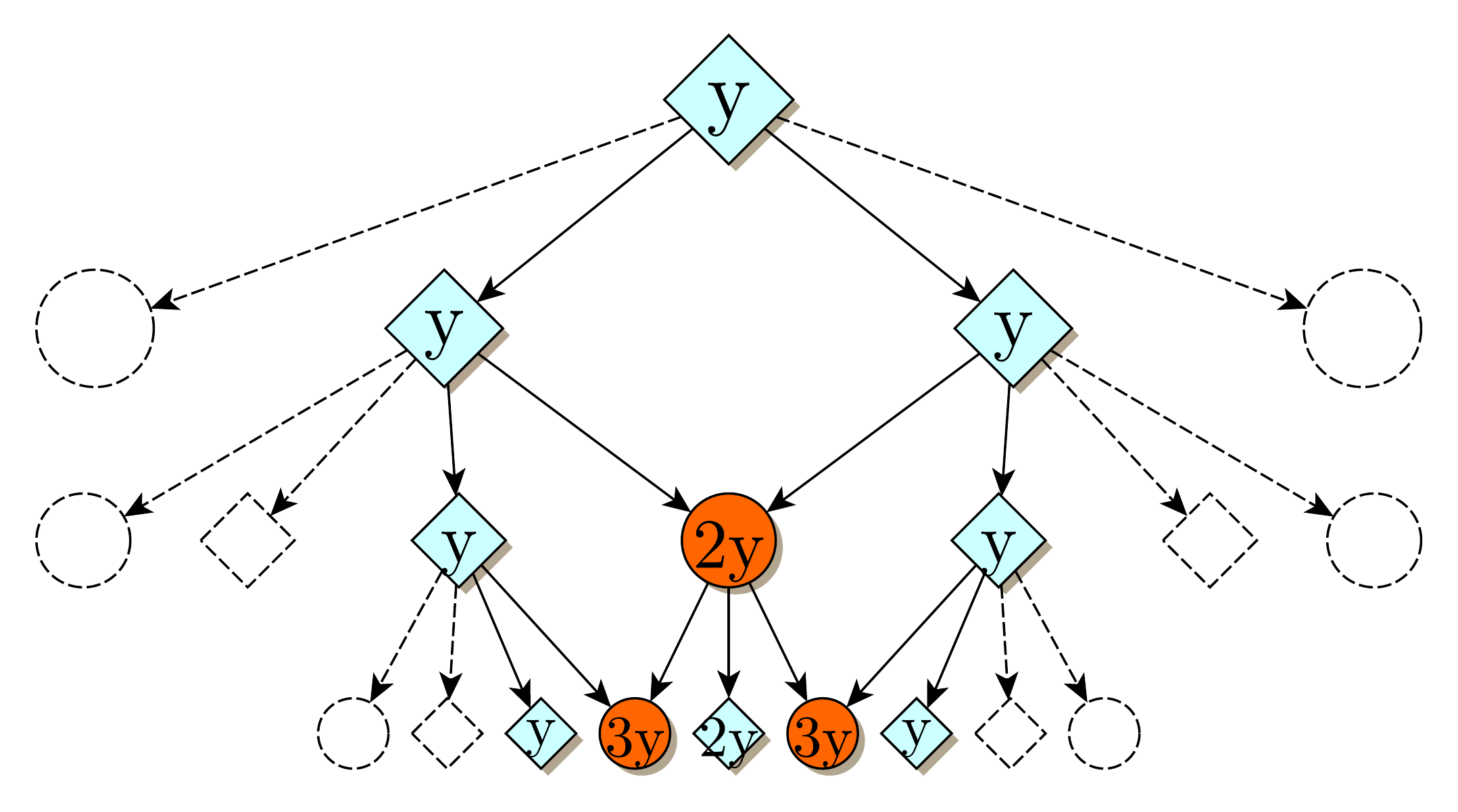}
 \caption{Central part of the Pascal triangle between rows $3k$ and $3(k+1)$}
 \label{fig:centralelement}
\end{figure}
\begin{cor}\label{c:rep}
The central part of row $n+3$ is a repetition of the whole row $n$ ($n\ge0$). 
\end{cor}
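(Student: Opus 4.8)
Proof proposal for Corollary~\ref{c:rep}.

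\medskip

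My plan is to read Corollary~\ref{c:rep} off directly from Lemma~\ref{lemma:rep}, exploiting the self-similarity it furnishes. Apply Lemma~\ref{lemma:rep} with $k=1$: the central vertex $W$ of row $3$ is of type $B$, carries the label $2$, and is the root of a subgraph $S\subseteq{\cal I}_{4,5}$ which is isomorphic to ${\cal I}_{4,5}$, with $W$ playing the role of the apex $V_0$ (its two \emph{included} descendants being the wingers of ``row $1$ of $S$'', and so on downward). I would fix such an isomorphism $\varphi\colon{\cal I}_{4,5}\to S$ and then read off the statement from the behaviour of $\varphi$ row by row.

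The key steps, in order, would be: (i) since $\varphi$ is a graph isomorphism with $\varphi(V_0)=W$, it carries the vertices at distance $n$ from $V_0$ (that is, row $n$ of ${\cal I}_{4,5}$, a left-to-right list of $s_n$ vertices) bijectively and order-preservingly onto the vertices of $S$ at distance $n$ from $W$; (ii) by the ``no outer input'' property of $S$ recorded in the proof of Lemma~\ref{lemma:rep} — once a shortest path from $V_0$ reaches $W$ it stays inside $S$ — those latter vertices are exactly the vertices of $S$ sitting in row $n+3$ of ${\cal I}_{4,5}$, so $\varphi$ maps row $n$ onto a contiguous block of $s_n$ entries of row $n+3$; (iii) because ${\cal I}_{4,5}$ and $S$ are both symmetric about the vertical axis through $V_0=W$ and $\varphi$ commutes with this reflection, the block is symmetric inside row $n+3$, hence it is the \emph{central} block of that row — this makes sense because $s_{n+3}-s_n=4(s_{n+2}-s_{n+1})$ is even by~(\ref{r45}) and positive (the $s_j$ being increasing), so the left end of the block lands in position $(s_{n+3}-s_n)/2$; (iv) $\varphi$ preserves, vertex by vertex, the number of ascendants and of included descendants, hence the type ($A$ or $B$, wingers counted as $B$), and it multiplies every label by $2$ (it sends the apex label $1$ to the label $2$ of $W$, and labels inside $S$ are produced additively with no outer input). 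Assembling (i)--(iv) shows that the central block of row $n+3$ reproduces the type pattern of row $n$ exactly, and, tracking the labels, reproduces the whole row $n$ with each label doubled; the case $n=0$ is immediate, the central block of row $3$ being the single type-$B$ vertex $W$, a copy of the single type-$B$ apex of row $0$.

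The step I expect to be the genuine obstacle is (ii)--(iii): establishing rigorously that ``row $n$ of $S$'' is precisely the central length-$s_n$ segment of the true row $n+3$ of ${\cal I}_{4,5}$ — i.e.\ that the boundary of $S$ (type-$B$ vertices of which only two of the four descendants are retained) really does insulate $S$, so that no shortest path out of $V_0$ leaves $S$ and re-enters it. This is exactly the claim already asserted, for the inductive step, inside the proof of Lemma~\ref{lemma:rep}, so in a completed write-up I would simply invoke it; the remaining arithmetic about positions and about the parity of $s_{n+3}-s_n$ is routine.
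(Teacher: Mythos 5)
Your proposal is correct and follows essentially the route the paper intends: Corollary~\ref{c:rep} is stated there as an immediate consequence of Lemma~\ref{lemma:rep} (no separate proof is given), and your argument is just a careful unpacking of that deduction via the isomorphism rooted at the central type-$B$ vertex of row $3$, including the ``no outer input'' insulation already asserted in the lemma's proof. Your extra observations (the parity of $s_{n+3}-s_n$ from~(\ref{r45}) to locate the central block, and the doubling of labels) are consistent with the paper, which in this subsection reads ``repetition'' as referring to the $A$/$B$ pattern only.
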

The reproductive phenomenon of Lemma \ref{lemma:rep} appears also at any element type $B$ which has a direct ascendant type $A$ in the previous row. Here we use the observation only for the central elements, later in Theorem \ref{lem:uv} we will refer it under general circumstances.
By Lemma \ref{lemma:rep} we know the middle part of the graph, since in row $3k$ the central element begins to reproduce the graph ${\cal I}_{4,5}$ itself (apart from the labelled values according to our deal). For further discussion, see the next lemma.

\begin{lemma}\label{lemma:R}
Let $n\ne1$. The pattern (of characters $A$ and $B$) given by the first $s_n$ elements of row $n+1$ coincide the pattern of row $n$.
\end{lemma}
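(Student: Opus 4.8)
The plan is to record each row as a word over the alphabet $\{A,B\}$ (the two wingers being read as the letter $B$, as agreed just above the lemma), to describe the passage from row $n$ to row $n+1$ as one fixed substitution, and then to reduce the assertion to a short prefix statement that is settled by induction.

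First I make the production rule precise. Write the type-pattern of row $n$ as $w_n=B\,\sigma_n\,B$, where $\sigma_n\in\{A,B\}^{s_n-2}$ lists the interior vertices from left to right. By the growing rule of ${\cal I}_{4,5}$ (here $q=5$): an interior vertex of type $A$ has $q-2=3$ descendants, of which the first and the last are shared with its left and right neighbour -- hence of type $A$ -- while the single middle one is an \emph{own} descendant of type $B$; an interior vertex of type $B$ has $q-1=4$ descendants, again with the first and last shared (type $A$) and with $q-3=2$ own middle ones of type $B$; a winger has $2$ descendants, namely the new winger (read as $B$) and one vertex shared with its interior neighbour (type $A$). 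Since two adjacent vertices of a row always share exactly one descendant, reading row $n+1$ from left to right -- the own part of a vertex followed by the shared $A$ standing to its right -- yields
\begin{equation*}
w_{n+1}=B\,A\,\mu(\sigma_n)\,B,
\end{equation*}
where $\mu$ is the monoid morphism determined by $\mu(A)=BA$ and $\mu(B)=BBA$; equivalently the interior pattern obeys $\sigma_{n+1}=A\,\mu(\sigma_n)$, with $\sigma_1=\varepsilon$, $\sigma_2=A$, $\sigma_3=ABA$, $\sigma_4=ABABBABA$ (compare Figure~\ref{fig:AB}).

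Next I reduce the claim. Since row $n+1$ has more vertices than row $n$, its first $s_n$ letters are $B$ followed by the first $s_n-1$ letters of $\sigma_{n+1}$; these reproduce $w_n=B\,\sigma_n\,B$ exactly when $\sigma_n B$ is a prefix of $\sigma_{n+1}$. Hence it suffices to prove that $\sigma_n B$ is a prefix of $\sigma_{n+1}$ for every $n\ge2$. The base case $n=2$ is immediate, $\sigma_2 B=AB$ being a prefix of $\sigma_3=ABA$; note that for $n=1$ the statement genuinely fails, since $\sigma_1 B=B$ is not a prefix of $\sigma_2=A$, which is precisely why $n=1$ must be excluded.

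For the inductive step assume $\sigma_{n+1}=\sigma_n B\,\rho$ for some word $\rho$. Using $\sigma_{n+2}=A\,\mu(\sigma_{n+1})$, the morphism property of $\mu$, the relation $A\,\mu(\sigma_n)=\sigma_{n+1}$, and $\mu(B)=BBA$, we get
\begin{equation*}
\sigma_{n+2}=A\,\mu(\sigma_n)\,\mu(B)\,\mu(\rho)=\sigma_{n+1}\,BBA\,\mu(\rho),
\end{equation*}
so $\sigma_{n+1}B$ is a prefix of $\sigma_{n+2}$; this closes the induction and, together with the reduction, proves the lemma. The delicate part of the argument is establishing the production rule $w_{n+1}=BA\,\mu(\sigma_n)B$: one has to read off carefully from the geometric construction that, when $q=5$, exactly the extreme two descendants of each interior vertex are the shared ones and all remaining descendants are own vertices of type $B$, and that the wingers contribute one own letter $B$ and one shared $A$; once the morphism $\mu$ has been identified, everything else is the routine induction above.
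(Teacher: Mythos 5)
Your proof is correct and follows essentially the same route as the paper: an induction on $n$ exploiting the fact that the row-to-row production rule is local and read off left to right. The paper states this in one line ("to construct row $n+1$ at the beginning of row $n+2$ it suffices to have row $n$ at the beginning of row $n+1$"), while you make the same induction precise by encoding the production as the morphism $\mu(A)=BA$, $\mu(B)=BBA$ and reducing the claim to the prefix relation $\sigma_nB\preceq\sigma_{n+1}$ — a welcome formalization, but not a different argument.
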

\begin{proof}
For the row $n$ with $n\le3$ the statement is trivial (see Figure \ref{fig:AB}, or Figure \ref{fig:symmetry01}). Assume now that the property of Lemma \ref{lemma:R} is true for row $n$. In order to construct the whole row $n+1$ at the beginning of row $n+2$, it is sufficient to have the whole row $n$ at the beginning of row $n+1$. But it is obviously true by the assumption of the induction.
\end{proof}

\begin{cor}
By the vertical symmetry, analogous statement holds for the last $s_n$ elements of row $n+1$.
\end{cor}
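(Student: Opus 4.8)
The plan is to derive the corollary directly from Lemma~\ref{lemma:R} together with the vertical symmetry of ${\cal I}_{4,5}$ recorded at the start of this subsection. Throughout, call the \emph{pattern} of a block of consecutive vertices the word over the alphabet $\{A,B\}$ obtained by reading their types from left to right, using the standing convention that every winger counts as a $B$.

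First I would make the vertical symmetry explicit at the level of patterns. The mirror reflection through the vertical axis of ${\cal I}_{4,5}$ maps the graph onto itself and carries row $m$ onto itself while reversing the left-to-right order of its $s_m$ vertices; moreover it sends wingers to wingers, so it is compatible with our convention. Hence the type of the vertex in position $i$ of row $m$ (counted from $0$) equals the type of the vertex in position $s_m-1-i$, and in particular the pattern of any full row is a palindrome. The same involution $i\mapsto s_{n+1}-1-i$ on row $n+1$ carries the block of the first $s_n$ vertices (positions $0,\dots,s_n-1$) onto the block of the last $s_n$ vertices (positions $s_{n+1}-s_n,\dots,s_{n+1}-1$), only reversing their order. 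Consequently the pattern of the last $s_n$ vertices of row $n+1$ is the reverse of the pattern of the first $s_n$ vertices of row $n+1$.

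Now I would invoke Lemma~\ref{lemma:R}, which applies since $n\ne 1$: the pattern of the first $s_n$ vertices of row $n+1$ coincides with the pattern of row $n$. Because row $n$ is a full row, by the first step its pattern is a palindrome, hence unchanged under reversal. Combining the two facts yields that the pattern of the last $s_n$ vertices of row $n+1$ equals the pattern of row $n$, as claimed. The argument is essentially bookkeeping; the only points needing a moment's care are the index computation showing that the mirror involution swaps the leading $s_n$-block with the trailing $s_n$-block (which merely requires $s_n\le s_{n+1}$, true for all $n\ge1$ since row lengths are nondecreasing) and the compatibility of the reflection with the winger convention. I do not expect any genuine obstacle beyond these routine checks.
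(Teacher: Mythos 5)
Your argument is correct and is exactly the elaboration the paper intends: the corollary is stated as an immediate consequence of the vertical symmetry, and your palindrome/reflection bookkeeping (mirror involution carries the leading $s_n$-block to the trailing one in reversed order, while the pattern of the full row $n$ is itself a palindrome) fills in the routine details of that same approach. No gaps.
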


\begin{cor}\label{cor:surprise}
Apart from the first few rows, it is clear that at the beginning of any row, let say row $n+1$, the part of the previous row ($n^{th}$) obtained by erasing the whole row $n-1$ from the end of row $n$ appears.
\end{cor}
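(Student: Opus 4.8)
The plan is to combine the two previous results, Lemma \ref{lemma:R} and Corollary \ref{c:rep}, and then argue by a simple subtraction of patterns. Fix $n$ large enough that the first few rows (where the structure has not yet stabilized) are excluded. The statement to prove is that the initial segment of row $n+1$, after one deletes a leading block of length $s_n$, equals the tail of row $n$ obtained by deleting the leading block of length $s_{n-1}$; equivalently, the portion of row $n+1$ lying strictly between position $s_{n-1}$ and position $s_n$ coincides with the portion of row $n$ lying from position $s_{n-1}$ onward.

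The key steps, in order, are as follows. First I would invoke Lemma \ref{lemma:R}: the pattern of the first $s_n$ entries of row $n+1$ is exactly the pattern of row $n$. Likewise, the pattern of the first $s_{n-1}$ entries of row $n$ is exactly the pattern of row $n-1$. Hence the first $s_{n-1}$ entries of row $n+1$ reproduce row $n-1$ as well, and in particular these two prefixes agree. Second, I would then simply restrict attention to what lies \emph{after} the first $s_{n-1}$ symbols inside the length-$s_n$ prefix of row $n+1$: since that whole prefix is a verbatim copy of row $n$, its tail past position $s_{n-1}$ is precisely the tail of row $n$ past position $s_{n-1}$, which is exactly ``row $n$ with its leading copy of row $n-1$ erased.'' Putting these together gives the claim. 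Third, I would note that the statement about erasing row $n-1$ ``from the end of row $n$'' rather than from the beginning is legitimate because of the vertical symmetry of ${\cal I}_{4,5}$ recorded before Lemma \ref{lemma:rep} (and in the corollary following Lemma \ref{lemma:R}): deleting the terminal block of length $s_{n-1}$ from row $n$ produces the mirror image of deleting the initial block, and the pattern is palindromic, so the two descriptions name the same subword of row $n+1$.

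The only genuinely delicate point — the ``main obstacle'' — is bookkeeping of indices and the threshold on $n$: one must check that the length-$s_n$ prefix of row $n+1$ really does strictly contain the length-$s_{n-1}$ prefix (i.e.\ $s_{n-1}<s_n$, which follows from monotonicity of $\{s_n\}$ via the Corollary to Theorem \ref{theorem:numvertex4q}), and that the small rows where Lemma \ref{lemma:R} was only asserted ``apart from the first few rows'' are explicitly set aside, exactly as the statement of the corollary already hedges with ``apart from the first few rows.'' Beyond that indexing care, the corollary is a direct consequence of Lemma \ref{lemma:R} applied twice (to the pair $(n,n+1)$ and to the pair $(n-1,n)$) together with the vertical symmetry, so no new combinatorial input is needed.
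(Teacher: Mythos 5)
Your first step already contains the entire proof, and the rest of your argument actually damages it. Corollary \ref{cor:surprise} is nothing more than a truncation of Lemma \ref{lemma:R}: by that lemma the first $s_n$ symbols of row $n+1$ are a verbatim copy of row $n$, so in particular the first $s_n-s_{n-1}$ symbols of row $n+1$ agree with the first $s_n-s_{n-1}$ symbols of row $n$; and since (by the corollary following Lemma \ref{lemma:R}, i.e.\ the mirrored statement) the \emph{last} $s_{n-1}$ symbols of row $n$ form a copy of row $n-1$, ``row $n$ with row $n-1$ erased from its end'' is exactly that length-$(s_n-s_{n-1})$ prefix. That is the whole argument, and it is why the paper states the corollary without proof.

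The genuine error is in your second and third steps. You reformulate the claim as concerning the block of row $n+1$ sitting in positions $s_{n-1}+1,\dots,s_n$, i.e.\ ``row $n$ with its \emph{leading} copy of row $n-1$ erased.'' That is a different word in a different location: the corollary asserts something about the very beginning of row $n+1$ and about erasing row $n-1$ from the \emph{end} of row $n$. Your attempted repair via vertical symmetry does not work: a row is a palindrome, so deleting its last $s_{n-1}$ symbols yields the \emph{reverse} of what deleting its first $s_{n-1}$ symbols yields, not the same subword. Concretely, row $5$ has pattern $BABABBABABBABBABABBABAB$ with $s_5=23$, $s_4=10$; erasing the trailing copy of row $4$ leaves the prefix $BABABBABABBAB$, while erasing the leading copy leaves the suffix $BABBABABBABAB$, and these already differ in the fourth character. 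So the ``two descriptions name the same subword'' claim is false, and with it the bridge from what you proved to what was asked. Delete steps two and three and conclude directly from step one by truncation.
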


Up to our present knowledge there is a ``central part'' of ${\cal I}_{4,5}$, and the left and right wings are isomorphic. 
The next lemma clarifies the structure of the left wing. 
\begin{lemma}\label{lemma:slice}
The left wing can be split into two subgraphs isomorphic to each other. One of the two is exactly determined in Corollary \ref{cor:surprise}.
\end{lemma}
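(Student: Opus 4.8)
The plan is to make the two halves of the left wing explicit row by row, to recognise the first one as the object of Corollary~\ref{cor:surprise}, and then to promote the resulting coincidence of $A/B$-patterns to an isomorphism of induced subgraphs by a horizontal shift, using the ``no outer input'' mechanism of Lemma~\ref{lemma:rep} at the dividing line. Let $\ell_n$ be the number of vertices that the left wing contributes to row~$n$. By Corollary~\ref{c:rep} the central part of row~$n$ repeats row~$n-3$, so $\ell_n=(s_n-s_{n-3})/2$; the recurrence~(\ref{r45}) gives $s_n-s_{n-3}=4(s_{n-1}-s_{n-2})$, whence $\ell_{n+1}=2(s_n-s_{n-1})$. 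On the other hand the word furnished by Corollary~\ref{cor:surprise} --- row~$n$ with its terminal copy of row~$n-1$ deleted --- has exactly $s_n-s_{n-1}$ letters. Therefore the left wing of row~$n+1$ decomposes into a first block $L_1$ of $s_n-s_{n-1}$ vertices, which by Lemma~\ref{lemma:R} carries precisely the Corollary~\ref{cor:surprise} pattern, followed by a second block $L_2$ of the same length; running over all rows, $L_1$ is the subgraph ``exactly determined in Corollary~\ref{cor:surprise}'', and it remains to show $L_1\cong L_2$.

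The core step is to prove that $L_1$ and $L_2$ carry the same pattern in every row, i.e.\ that the left wing of row~$n+1$ is the square of the Corollary~\ref{cor:surprise} word. I would do this by induction on~$n$. For $n\ge4$ one has $s_n\ge 2s_{n-1}$, so the left wing of row~$n+1$ already contains the whole initial copy of row~$n$ delivered by Lemma~\ref{lemma:R}; moreover row~$n$ begins with the pattern of row~$n-1$ (Lemma~\ref{lemma:R}) and, by the vertical symmetry of ${\cal I}_{4,5}$, ends with it as well, while it has row~$n-3$ as its central part (Corollary~\ref{c:rep}). Feeding the induction hypothesis --- that the left wings of rows $n$ and $n-1$ are already squares of the respective Corollary~\ref{cor:surprise} words --- into this block decomposition fixes the pattern of the left wing of row~$n+1$ beyond its initial copy of row~$n$, and a length count, governed by $s_n-s_{n-1}=3(s_{n-1}-s_{n-2})-(s_{n-2}-s_{n-3})$, shows that it closes up into the asserted square.

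With the pattern coincidence in hand, I would take the candidate isomorphism to be the horizontal shift which, inside each row~$n+1$, carries the $i$-th vertex of $L_1$ to the $i$-th vertex of $L_2$. Adjacency of ${\cal I}_{4,5}$ between two consecutive rows is entirely determined by the pattern of the upper row --- a winger, a type-$A$, a type-$B$ vertex has $2$, $3$, $4$ descendants, neighbouring vertices share exactly one descendant, that shared one is of type~$A$, the rest are of type~$B$ --- so the shift carries edges to edges wherever the left-wing pattern is periodic with the right period, which by the previous step is everywhere except at the $L_1$--$L_2$ interface. There one argues exactly as in the boundary analysis of Lemma~\ref{lemma:rep}: the block $L_2$ is to be taken, like the repeated copies of Lemma~\ref{lemma:rep}, with only its inward descendants included, and then its left boundary, a type-$B$ vertex with two of its four descendants excluded, behaves just like the left-leg winger that bounds $L_1$; similarly the right ends of $L_1$ and of $L_2$ are both immediately followed by a letter $B$ (the first letter of the second block, respectively of the central copy of row~$n-2$), so the two interfaces are locally identical and neither half feeds spurious input into the other. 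Together with the vertical symmetry of ${\cal I}_{4,5}$ this establishes $L_1\cong L_2$.

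Finally, the first few rows, in which the central reproducing structure of Lemma~\ref{lemma:rep} has not yet opened up and the two copies of row~$n-1$ inside row~$n$ still overlap, are verified by inspection against Figure~\ref{fig:AB}. I expect the genuine obstacle to be the inductive pattern computation of the second paragraph: keeping track of how the initial copy of row~$n-1$, its terminal copy, and the central copy of row~$n-3$ inside row~$n$ overlap --- and overlap differently for small and large $n$ --- is the delicate part, and it is this overlap bookkeeping rather than the concluding isomorphism (which is essentially formal once the patterns agree) that carries the content of the lemma.
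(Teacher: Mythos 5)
Your proposal is correct and follows essentially the same route as the paper, which proves the lemma by induction on the rows using the self-similar decomposition read off from Figure~\ref{fig:symmetry01}; your length bookkeeping ($\ell_{n+1}=(s_{n+1}-s_{n-2})/2=2(s_n-s_{n-1})$, matching twice the length $s_n-s_{n-1}$ of the Corollary~\ref{cor:surprise} prefix) is exactly the quantitative content that the paper leaves implicit in the figure, and it checks out against rows $4$--$6$. The only difference is one of detail: the paper's proof is a single sentence, while you carry out the overlap analysis and the boundary (``no outer input'') argument explicitly.
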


\begin{proof}
Apply induction for the rows $n\ge5$ on Figure \ref{fig:symmetry01} to see the statement. 
\end{proof}

\begin{figure}[h!]
 \centering
 \includegraphics[width=0.99\linewidth]{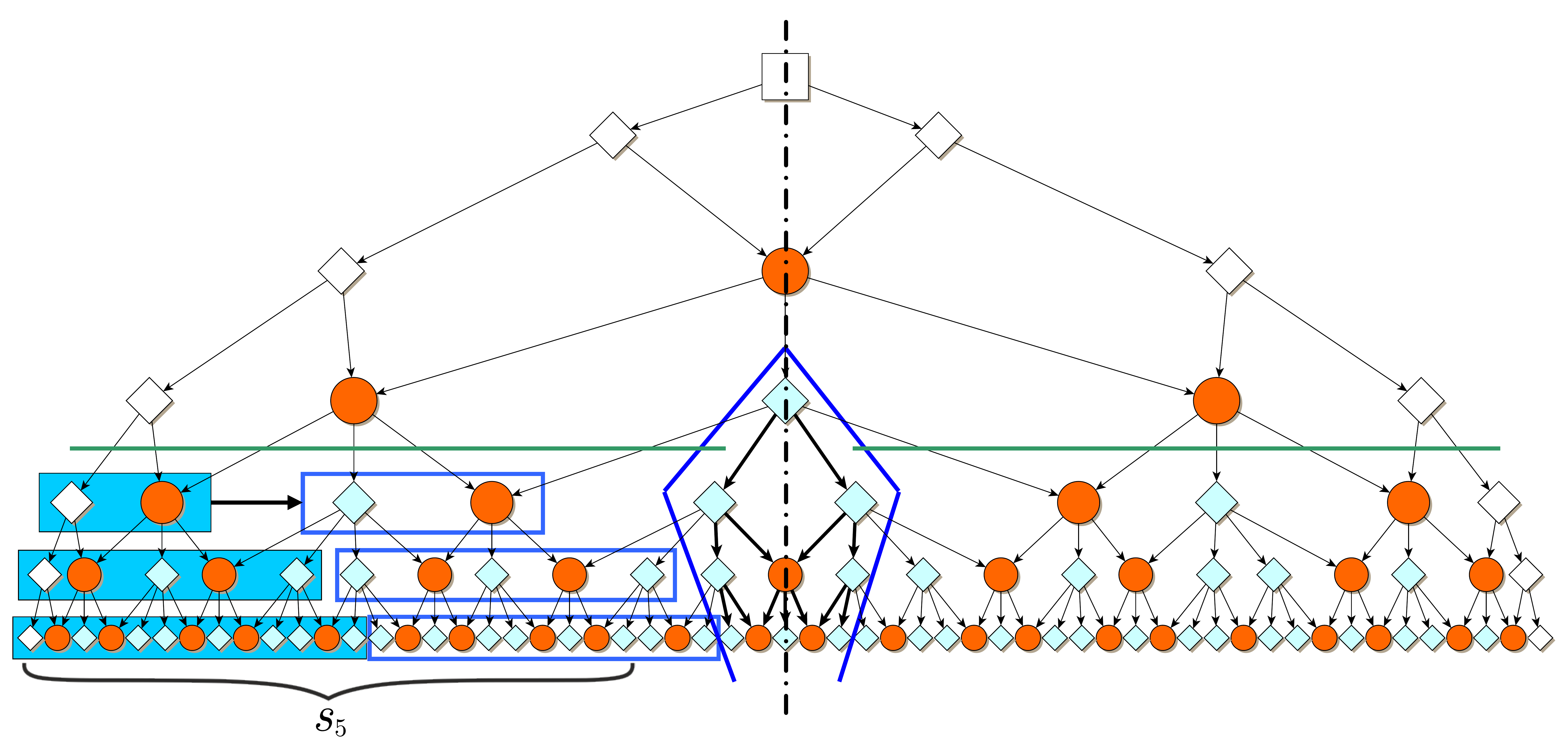}
 \caption{Symmetries of the hyperbolic Pascal triangle $\{4,6\}$}
 \label{fig:symmetry01}
\end{figure}
 
Now we can phrase the main result of this subsection. Let encode the type of the elements $A$ and $B$ by $0$ and $1$, respectively, and consider the code of row $n$ as a binary expansion of the integer $\vp_n$. For example, if $n=3$, then the pattern $BABAB$ of row $3$ is transformed into $10101$, hence $\vp_3=10101_{2}=21$.

\begin{theorem}
Put $S_n=2^{s_{n+1}-s_n}$, and $\Phi(n)=\vp_{n+1}-\vp_n$. The terms of the sequence $\Phi_n$ satisfy
$$
\Phi_n=\left(\frac{S_n}{S_{n-1}}+S_n+S_{n-1}\right)\Phi_{n-1}-S^2_{n-1}\Phi_{n-2},\qquad(n\ge3).
$$
\end{theorem}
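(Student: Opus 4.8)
The plan is to encode the type-pattern of each row as a finite binary word and then exploit the self-similarity of ${\cal I}_{4,5}$ collected in the preceding lemmas. Write $R_n$ for the word of row $n$; it has length $s_n$, it is a palindrome by the vertical symmetry, and its value read as a binary integer is $\vp_n$. The skeleton of the proof is the pair of word identities
$$R_{n+1}=R_n\,T_n\,R_n\quad(n\ge3),\qquad T_3=\varepsilon,\qquad T_n=T_{n-1}\,R_{n-2}\,T_{n-1}\quad(n\ge4),$$
where $T_n$ denotes the ``middle block'' of row $n+1$ and $|T_n|=s_{n+1}-2s_n\ge0$. The first identity is Lemma~\ref{lemma:R} together with its symmetric counterpart (the inequality $s_{n+1}\ge2s_n$ for $n\ge3$ follows from $s_{n+1}=4s_n-4s_{n-1}+s_{n-2}$ by a one-line induction, so that the two marginal copies of $R_n$ do not overlap). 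The second identity should follow by induction from the reproduction phenomenon of Lemma~\ref{lemma:rep} and Corollary~\ref{c:rep} (a central copy of $R_{n-2}$ inside row $n+1$) combined with Lemma~\ref{lemma:slice} and Corollary~\ref{cor:surprise}, which identify the word sitting between the leading copy of $R_n$ and that central copy of $R_{n-2}$ as precisely $T_{n-1}$.

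Next I would turn the two word identities into arithmetic. Set $W_n:=2^{s_n}\widetilde T_n$, where $\widetilde T_n$ is the value of $T_n$. Reading $R_{n+1}=R_n T_n R_n$ digit by digit, and using $S_n=2^{s_{n+1}-s_n}=2^{\,s_n+|T_n|}$, yields
$$\vp_{n+1}=(S_n+1)\vp_n+W_n,\qquad\text{equivalently}\qquad\Phi_n=S_n\vp_n+W_n.$$
Reading $T_n=T_{n-1}R_{n-2}T_{n-1}$ digit by digit, and simplifying the powers of $2$ that arise by $2^{\,s_{n-2}+|T_{n-1}|}=S_{n-1}/S_{n-2}$ and $2^{\,s_n+|T_{n-1}|}=S_{n-1}^2$ (both just bookkeeping with the integers $s_k$), yields
$$W_n=S_{n-1}\!\left(\tfrac{S_{n-1}}{S_{n-2}}+1\right)W_{n-1}+S_{n-1}^2\,\vp_{n-2}\qquad(n\ge4).$$

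The arithmetic fact that makes the coefficients collapse is $S_nS_{n-2}=S_{n-1}^3$, equivalently $\tfrac{S_{n-1}^2}{S_{n-2}}=\tfrac{S_n}{S_{n-1}}$; this is nothing but $s_{n+1}=4s_n-4s_{n-1}+s_{n-2}$ (the corollary to Theorem~\ref{theorem:numvertex4q}) written for exponents, valid for $n\ge3$. With it, $S_{n-1}\bigl(\tfrac{S_{n-1}}{S_{n-2}}+1\bigr)=\tfrac{S_n}{S_{n-1}}+S_{n-1}$, so the $W$-relation becomes $W_n=\bigl(\tfrac{S_n}{S_{n-1}}+S_{n-1}\bigr)W_{n-1}+S_{n-1}^2\vp_{n-2}$. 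Now substitute $W_n=\Phi_n-S_n\vp_n$ and $W_{n-1}=\Phi_{n-1}-S_{n-1}\vp_{n-1}$ (from $\Phi_k=S_k\vp_k+W_k$), expand, and use $\vp_n-\vp_{n-1}=\Phi_{n-1}$ and $\vp_{n-1}-\vp_{n-2}=\Phi_{n-2}$: every term carrying a bare $\vp_k$ cancels and there remains
$$\Phi_n=\left(\tfrac{S_n}{S_{n-1}}+S_n+S_{n-1}\right)\Phi_{n-1}-S_{n-1}^2\,\Phi_{n-2}\qquad(n\ge4).$$
Finally I would verify the case $n=3$ directly from the explicit first rows (there $T_3=\varepsilon$, so $R_4=R_3R_3$), which finishes the argument.

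I expect the genuine difficulty to lie entirely in the first paragraph: making the nested word identity $T_n=T_{n-1}R_{n-2}T_{n-1}$ rigorous needs a careful induction on the growth rule of ${\cal I}_{4,5}$ together with the precise placement statements of Lemmas~\ref{lemma:rep}, \ref{lemma:slice} and Corollaries~\ref{c:rep}, \ref{cor:surprise}, and one must also handle the handful of small rows where the marginal copies of $R_n$ would overlap. Once the two word identities are available, the rest is routine algebra whose only ``miracle'' is $S_nS_{n-2}=S_{n-1}^3$, i.e.\ the recurrence already established for the row lengths $s_n$.
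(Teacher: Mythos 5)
Your argument is correct and rests on the same foundation as the paper's proof --- the self-similar structure of the rows established in Lemma~\ref{lemma:R}, Corollary~\ref{c:rep}, Lemma~\ref{lemma:slice} and Corollary~\ref{cor:surprise} --- but you organize the decomposition differently. The paper writes row $n+1$ in one stroke as the five-block word $Y\,Y\,R_{n-2}\,X\,X$, where $Y$ and $X$ are the first and last $s_n-s_{n-1}$ characters of row $n$; translating this into binary values gives a single closed expression for $\vp_{n+1}$ in terms of $\vp_n$, $\vp_{n-1}$, $\vp_{n-2}$, from which the $\Phi_n$ recurrence drops out after invoking $s_{n+1}=4s_n-4s_{n-1}+s_{n-2}$. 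You instead use the nested identities $R_{n+1}=R_nT_nR_n$ and $T_n=T_{n-1}R_{n-2}T_{n-1}$ and eliminate the auxiliary middle-block value $W_n$; since $R_n=R_{n-1}T_{n-1}R_{n-1}$, your decomposition refines to $R_{n-1}T_{n-1}R_{n-1}T_{n-1}R_{n-2}T_{n-1}R_{n-1}T_{n-1}R_{n-1}$, which is literally the paper's word $Y\,Y\,R_{n-2}\,X\,X$, so the two routes are reconciled. Your bookkeeping checks out: $2^{s_n+|T_{n-1}|}=S_{n-1}^2$, $2^{s_{n-2}+|T_{n-1}|}=S_{n-1}/S_{n-2}$, and the collapse $S_{n-1}^2/S_{n-2}=S_n/S_{n-1}$ is exactly the $s$-recurrence; the base case $n=3$ with $T_3=\varepsilon$, $R_4=R_3R_3$ gives $\Phi_3=672=(8+32+4)\cdot16-16\cdot2$ as required. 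What you gain is a cleaner separation between the combinatorial content (the two word identities) and the routine algebra; what you pay is that the key identity $T_n=T_{n-1}R_{n-2}T_{n-1}$ still needs the careful induction you flag --- but the paper is no more explicit on this point, asserting its decomposition directly from the lemmas and a figure, so your proposal is at least as complete as the published argument.
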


\begin{proof}
The combination of Lemma \ref{lemma:slice}, Corollary \ref{cor:surprise} and the vertical symmetry imply (see Figure \ref{fig:symmetry01}) that
\begin{eqnarray*}
\vp_{n+1}&=&\left(\vp_n-2^{s_{n}-s_{n-1}}\vp_{n-1}\right)\left(2^{s_{n}-s_{n-1}}+1\right)+2^{2(s_{n}-s_{n-1})}\vp_{n-2} \\
&&+\frac{\vp_n-\vp_{n-1}}{2^{s_{n-1}}}\left(2^{2(s_n-s_{n-1})+s_{n-2}}+2^{3(s_n-s_{n-1})+s_{n-2}}\right).
\end{eqnarray*}
Applying the recursive formula of the sequence $\{s_n\}$ given by Theorem \ref{theorem:numvertex4q} and the definition of $S_n$ and $\Phi_n$, the proof is complete.
\end{proof}

\subsection{Occurrence of integers of certain types in hyperbolic Pascal triangle ${\cal I}_{4,5}$}

Since $\binomh{i}{1}=i$, it follows trivially that any positive integer appears in the triangle linked to ${\cal I}_{4,5}$. Therefore a more difficult question arises naturally, whether any pair of positive integers exists in the triangle as neighbour elements of a row. 

\begin{theorem}\label{lem:uv}
Given $u,v\in \mathbb{N}^+$, then there exist $i,j\in \mathbb{N}^+$ such that $u=\binomh{i}{j}$ and $v=\binomh{i}{j+1}$. 
\end{theorem}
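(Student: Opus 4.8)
The plan is to descend from an arbitrary target pair $(u,v)$ to a trivial base pair by a Euclidean-type recursion, lifting pairs of consecutive entries from one row to the next by means of the growth rule of ${\cal I}_{4,5}$. First I would record the exact local shape of the children of a vertex, reading downward edges from left to right. An interior type-$A$ vertex of value $x$ has three children: a type-$A$ merge with its left neighbour (of value $x$ plus that neighbour's value), a single type-$B$ copy of itself (value $x$), and a type-$A$ merge with its right neighbour. An interior type-$B$ vertex of value $x$ has four children: a merge with its left neighbour, two type-$B$ copies of itself (both of value $x$), and a merge with its right neighbour. A winger (value $1$) has two children, a copy of itself (again a winger) and one merge. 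From this I extract two facts. \textbf{(F1):} if $(x,y)$ occur as consecutive entries of some row and $y\ge 2$, then $(x+y,\,y)$ occur as consecutive entries of the next row --- indeed $y$, being $\ge 2$, is an interior vertex, its leftmost child is the merge with its left neighbour and has value $x+y$, and its next child is a type-$B$ copy of value $y$; symmetrically, if $x\ge 2$ then $(x,\,x+y)$ occur consecutively in the next row. \textbf{(F2):} if some row contains a type-$A$ vertex of value $m$ --- equivalently, some row contains two consecutive entries summing to $m$ --- then its type-$B$ copy-child has value $m$ and is an interior type-$B$ vertex, hence has two adjacent type-$B$ children both of value $m$, so $(m,m)$ occur as consecutive entries two rows below.

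I would also record the base facts. Since $\binomh{i}{0}=1$ (the left winger) and $\binomh{i}{1}=i$, the pair $(1,i)$ occurs as consecutive entries of row $i$ for every $i\ge 1$, and, by the vertical symmetry of ${\cal I}_{4,5}$, so does $(i,1)$. That symmetry also shows that $(x,y)$ occurs as a pair of consecutive entries exactly when $(y,x)$ does.

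The proof itself is strong induction on $u+v$. If $\min(u,v)=1$, the pair occurs by the base facts. Assume $u,v\ge 2$; by the symmetry remark we may take $u\ge v$. If $u=v$, then $(u-1,1)$ occurs as consecutive entries (base fact, as $u-1\ge 1$), so some row carries a type-$A$ vertex of value $(u-1)+1=u$, and \textbf{(F2)} gives $(u,u)=(u,v)$ among consecutive entries. If $u>v$, look at $(u-v,\,v)$: its entries are positive and its sum is $(u-v)+v=u<u+v$, so it occurs as a pair of consecutive entries --- by the base fact if $u-v=1$, otherwise by the induction hypothesis --- and since $v\ge 2$, \textbf{(F1)} then produces $(u-v+v,\,v)=(u,v)$ among consecutive entries of the next row. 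This closes the induction.

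The real content --- and the step needing the most care --- is establishing \textbf{(F1)} and \textbf{(F2)} straight from the construction: one must pin down the left-to-right order of the children of an interior vertex (merge with the left neighbour, then the type-$B$ copy or copies, then merge with the right neighbour), check that the merge of $x$'s last edge with $y$'s first edge is exactly the child carrying value $x+y$ and sitting next to a surviving copy of $y$, and dispose of the (easy) winger cases separately. It is worth noting that \textbf{(F1)} by itself is not enough: the mediant step only reaches pairs visible to a Stern--Brocot-type descent, so the diagonal pairs $(m,m)$ must be injected by \textbf{(F2)} --- the value-labelled echo of the self-similarity in Lemma~\ref{lemma:rep} --- after which the Euclidean descent terminates at either a pair containing a $1$ or a pair $(m,m)$, both already in hand.
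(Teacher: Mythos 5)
Your proof is correct, and its engine is the same as the paper's: reverse the Euclidean algorithm on $(u,v)$ using the local fact that a non-winger $y$ with left neighbour $x$ produces the adjacent pair $(x+y,\,y)$ in the next row (your (F1), the paper's Figure 12). The differences are organizational but worth noting. You run the subtractive form of the algorithm as a strong induction on $u+v$, which terminates either at a pair containing a $1$ or at a diagonal pair $(m,m)$; the latter you supply via (F2), the observation that a type-$A$ vertex of value $m$ has a type-$B$ copy whose own children include two adjacent copies of $m$. The paper instead runs the division form, reads off the row index explicitly as $i=t_{n-1}+\sum r_k$, and when $\gcd(u,v)=t_n>1$ it does not reduce to $(t_n,t_n)$ but rather invokes the self-similarity of ${\cal I}_{4,5}$: the subgraph rooted at $\binomh{t_n+1}{2}$ is isomorphic to the whole triangle with all labels scaled by $t_n$, so the coprime case applied to $(u/t_n,v/t_n)$ transfers. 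Your route avoids that heavier self-similarity lemma entirely (note that the paper's ad hoc identity $\binomh{v+2}{4}=\binomh{v+2}{5}=v$ for the case $u=v$ is exactly your (F2) applied to the type-$A$ vertex $\binomh{v}{1}$), at the cost of not producing an explicit row index; the paper's version buys the formula for $i$ and reuses machinery it needs anyway for Theorem \ref{th:fn}. Your closing caveat is also the right one: the only real work is pinning down the left-to-right order of children (merge-left, copy or copies, merge-right), which follows from the growing rule of Section \ref{sec:hpt4q}, plus the fact that every vertex of value $\ge 2$ is interior since all wingers carry the label $1$.
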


\begin{proof}
There is no restriction in assuming $u\le v$ by the vertical symmetry of the triangle.
Clearly, if $u=1$, then $u=\binomh{v}{0}$ and $v=\binomh{v}{1}$. Moreover, if $u=v\ne1$, then choosing $u=\binomh{v+2}{4}$ and $v=\binomh{v+2}{5}$ is suitable. 

In the non-trivial cases, consider the Euclidean algorithm for $u$ and $v$ as follows. 
\begin{equation}\label{eq:Euclidean_alg}
\arraycolsep=1.8pt\def\arraystretch{1}
\begin{array}{rcrcll}
v   &=& r_0 u &+& t_1 & \quad(0<t_1<u),\\
u &= & r_1 t_1 &+& t_2 & \quad(0<t_2<t_1),\\
& \vdots & & &\\
t_{n-2}&=& r_{n-1} t_{n-1} &+& t_n & \quad(0<t_n<t_{n-1}),\\
t_{n-1}&=& r_{n} t_{n}. & &\qquad &\;
\end{array}
\end{equation}

If we assume $t_n=\gcd(u,v)=1$, then $t_n=\binomh{t_{n-1}}{0}$ and $t_{n-1}=\binomh{t_{n-1}}{1}$ 
.
We can follow step by step the reverse order of the Euclidean algorithm on the graph.
 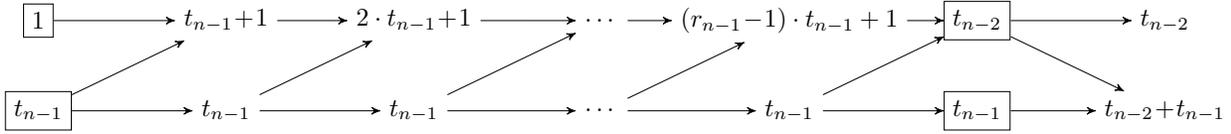
\begin{figure}[h!] \centering \footnotesize
\scalebox{0.96}{ \begin{tikzpicture}[->,xscale=2.6,yscale=2.5, auto,swap]
             \node (a0) at (0,0) [shape=rectangle,draw] {$t_{n-1}$};
             \node (a1) at (1,0) {$t_{n-1}$};
             \node (a2) at (2,0) {$t_{n-1}$};
             \node (a3) at (3,0) {$\cdots $};
             \node (a4) at (4,0) {$t_{n-1}$};
             \node (a5) at (5,0)[shape=rectangle,draw] {$t_{n-1}$};
             \node (b0) at (0,0.5) [shape=rectangle,draw]{$1$};
             \node (b1) at (1,0.5) {$t_{n-1}\!+\!1$};
             \node (b2) at (2,0.5) {$2 \cdot t_{n-1}\!+\!1$};
             \node (b3) at (3,0.5) {$\cdots$};
             \node (b4) at (4,0.5) {$(r_{n-1}\!-\!1) \cdot t_{n-1}+1$};
             \node (b5) at (5,0.5)[shape=rectangle,draw] {$t_{n-2}$};

             \node (c1) at (6,0) {$t_{n-2}\!+\!t_{n-1}$};
             \node (c2) at (6,0.5) {$t_{n-2}$};
     
             \path (a0) edge node {} (a1);
             \path (a1) edge node {} (a2);
             \path (a2) edge node {} (a3);
             \path (a3) edge node {} (a4);
             \path (a4) edge node {} (a5);
             \path (b0) edge node {} (b1);
             \path (b1) edge node {} (b2);
             \path (b2) edge node {} (b3);
             \path (b3) edge node {} (b4);
             \path (b4) edge node {} (b5);
             \path (a0) edge node {} (b1);   
             \path (a1) edge node {} (b2);                       
             \path (a2) edge node {} (b3);   
             \path (a3) edge node {} (b4);  
             \path (a4) edge node {} (b5);
             \path (a5) edge node {} (c1);   
             \path (b5) edge node {} (c2);  
             \path (b5) edge node {} (c1);                 
         \end{tikzpicture}}
  \caption{Illustration of the penult step of the Euclidean algorithm}
  \label{fig:last_Eucl}
 \end{figure}
 
Figure \ref{fig:last_Eucl} illustrates the penult Euclidean division $t_{n-2}=r_{n-1} t_{n-1} +1$. It shows that if $1$ and $t_{n-1}$ are neighbours, then $r_{n-1}$ rows hereinafter we find $t_{n-1}$ and $t_{n-2}$ as elements next to each other. Repeating this argument by going up in the steps of (\ref{eq:Euclidean_alg}), finally we will find the position of the neighbours $u$ and $v$ (by going down in the graph). If one switches from one step to the previous one in (\ref{eq:Euclidean_alg}), the side of the ``escorting'' elements of type $B$ is also changed (see the right end of Figure \ref{fig:last_Eucl}). Put $r=\sum_{k=0}^{n-1}r_k$. Then row $i=t_{n-1}+r$ is a suitable choice for finding the neighbours $u$ and $v$.

Assume now that $t_n>1$. Since $\binomh{t_n}{1}=t_n$ is an element of type $A$, it has three descendants, the middle one is $\binomh{t_n+1}{2}=t_n$. This element is an origin of a subgraph of ${\cal I}_{4,5}$ which structurally isomorphic to ${\cal I}_{4,5}$ itself (we quote the observation after Corollary \ref{c:rep}). But the values labelled to the appropriate descendants of $\binomh{t_n+1}{2}$ are $t_n$-multiple of ${\cal I}_{4,5}$. Here $i=r+2t_{n-1}/t_n+1$.
\end{proof}

\begin{theorem}\label{th:fn}
Let $\eta$, and  $f_0<f_1$ denote positive integers with $\gcd(f_0,f_1)=1$.
Further let $\{f_n\}$ denote a binary recurrence sequence given by 
$$f_{n}=\eta f_{n-1}+f_{n-2},\quad(n\geq2).$$ 
The values $f_0$ and $f_1$ appear next to each other in a suitable row of the Pascal triangle, such that the type of $f_1$ is $A$. Then all elements of the sequence are descendants of the vertex labelled by $f_1$, all have type $A$, moreover the distance of $f_j$ and $f_{j+1}$ ($j\ge1$) is $\eta$. 
\end{theorem}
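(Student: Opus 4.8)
The plan is to argue by induction on $j$, relying on the reproductive structure of the triangle established in Lemma~\ref{lemma:rep} and the observation quoted in the proof of Theorem~\ref{lem:uv} (that any type-$B$ vertex with a type-$A$ ascendant roots a subgraph isomorphic to ${\cal I}_{4,5}$). First, I would invoke Theorem~\ref{lem:uv} to place $f_0$ and $f_1$ next to each other in some row of the Pascal triangle. The extra requirement here is that $f_1$ be of type $A$: since $f_1=\binomh{i}{j}$ for some position, and a type-$A$ vertex at $\binomh{i}{j}$ always has a middle descendant equal to $\binomh{i+1}{j+1}$ with the same value (this was used already in the $t_n>1$ case of the previous proof), if the located copy of $f_1$ is type $B$ one simply notes that a type-$B$ vertex has a child that is type $A$ with the same label or can be reached a bounded number of rows further down; in any case a copy of $f_1$ of type $A$ with $f_0$ still adjacent on the appropriate side can be found. (In fact, reading the proof of Theorem~\ref{lem:uv}, the neighbour pair $u,v$ is produced with $v$ sitting directly below a type-$B$ escort, so $v$ itself is type $A$; I would spell this out.)

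The inductive step is the heart of the matter. Suppose $f_{j-1}$ and $f_j$ are adjacent in some row, with $f_j$ of type $A$, and $f_{j-1}<f_j$ lying on the, say, left of $f_j$. I claim the vertex labelled $f_j$ has a descendant, exactly $\eta$ rows below, that is labelled $f_{j+1}=\eta f_j+f_{j-1}$, is of type $A$, and has $f_j$ adjacent to it (on the opposite side, so the escort alternates as in Figure~\ref{fig:last_Eucl}). This is precisely the mechanism of one Euclidean-division block in the proof of Theorem~\ref{lem:uv}, read forwards: starting from the adjacent pair $(f_{j-1},f_j)$ with $f_j$ of type $A$, after one step down the neighbour of $f_j$ picks up the value $f_{j-1}+f_j$ by the addition rule; continuing down along the ``left leg'' of the subgraph rooted at the copy of $f_j$, after $k$ further rows the neighbour value is $kf_j+f_{j-1}$ (each step adds another $f_j$ because $f_j$ is reproduced verbatim down the edge of its subgraph, $f_j$ itself being type $A$ with a constant descendant), and the companion stays equal to $f_j$ until row $\eta$, where the two merge into a new type-$A$ vertex carrying $\eta f_j+f_{j-1}=f_{j+1}$. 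The subgraph-isomorphism (Lemma~\ref{lemma:rep} and the remark after Corollary~\ref{c:rep}) is what guarantees that no ``outer input'' disturbs these labels, so the arithmetic is exactly that of the recurrence, and that the new vertex is again type $A$ with $f_j$ adjacent, completing the induction; the distance between consecutive $f_j$'s is $\eta$ by construction.

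The main obstacle I anticipate is bookkeeping the side-alternation and the type of the companion vertex carefully: one must check that throughout the $\eta$ descending rows the companion of the running sum remains the \emph{unchanged} value $f_j$ (type $A$, hence its middle descendant repeats it) and that the two edges that finally meet do produce a vertex of type $A$ rather than type $B$ — this is where the precise local picture of Figure~\ref{fig:last_Eucl}, transplanted into the scaled subgraph rooted at $f_j$, has to be matched against the growth rule. Once that local verification is in place (it is essentially the penult-step picture of Theorem~\ref{lem:uv} with $t_{n-1}$ replaced by $f_j$ and $r_{n-1}$ by $\eta$, and repeated ad infinitum rather than once), the induction runs cleanly and the statement about all $f_n$ being type-$A$ descendants of the $f_1$-vertex follows immediately. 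A secondary, minor point is the base case: making sure the row furnished by Theorem~\ref{lem:uv} can be taken with $f_1$ of type $A$, which, as noted, is built into the construction there.
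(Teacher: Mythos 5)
Your proposal is correct and follows essentially the same route as the paper: invoke Theorem~\ref{lem:uv} to place $f_0,f_1$ adjacently with $f_1$ of type $A$, then iterate the single Euclidean-division block of Figure~\ref{fig:last_Eucl} forwards, each block descending exactly $\eta$ rows with the type-$B$ escort copy of $f_j$ alternating sides and the type-$A$ accumulator producing $f_{j+1}=\eta f_j+f_{j-1}$; the paper states this in one line, you merely spell out the local verification. One parenthetical fallback is wrong — a type-$B$ vertex never has a \emph{type-$A$} child with the same label (same-label children are always type $B$) — but it is not needed, since, as you correctly note afterwards, the construction in Theorem~\ref{lem:uv} already delivers $f_1$ as a type-$A$ vertex.
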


\begin{proof}
Obviously, the sequence $\{f_n\}$ is strictly monotone increasing. In the virtue of Theorem \ref{lem:uv}, $f_0$ and $f_1$ exist next to each other. From the proof it is clear that the type of $f_1$ is $A$. By the defining rule of the sequence $\{f_n\}$, the Euclidean algorithm for determining the greatest common divisor of $f_n$ and $f_{n-1}$ ($n\ge2$) consists of the steps
$$
f_i=\eta f_{i-1}+f_{i-2},\qquad (i=n,n-1,\dots,2).
$$
To follow one step in the graph is analogous to the treatment in Theorem \ref{lem:uv}. In each step we go down exactly $\eta$ rows.
\end{proof}

Theorem \ref{th:fn} can be extended for any pair $f_0$ and $f_1$ if we combine it with Theorem \ref{lem:uv}.
 
The most interesting illustration of Theorem \ref{th:fn} is the Fibonacci sequence $\{F_n\}$. Now $\eta=1$, further we consider the sequence from $F_2=1$ and $F_3=2$.
Clearly, $\binomh{2}{0}=F_2$ and $\binomh{2}{1}=F_3$. The value $\eta=1$ implies that in each row the next Fibonacci number appears. From the proof of Theorem \ref{lem:uv} it follows that to have the sequence $\{F_n\}$ we need to use periodically the left-right descendants, starting from $F_2$. Since $\binomh{1}{0}=F_2$ and $\binomh{0}{0}=F_1$ also hold, we can see the whole Fibonacci sequence $\{F_n\}_{n\ge0}$ in the present hyperbolic Pascal triangle (see Figure \ref{fig:Pascal_Fibonacci}). An other well-known example, the Pell sequence $\{P_n\}_{n\ge0}$ defined by $P_1=1$, $P_2=2$ and $\eta=2$.  (For illustration see the same figure.)

\begin{figure}[h!]
 \centering
 \includegraphics[width=\linewidth]{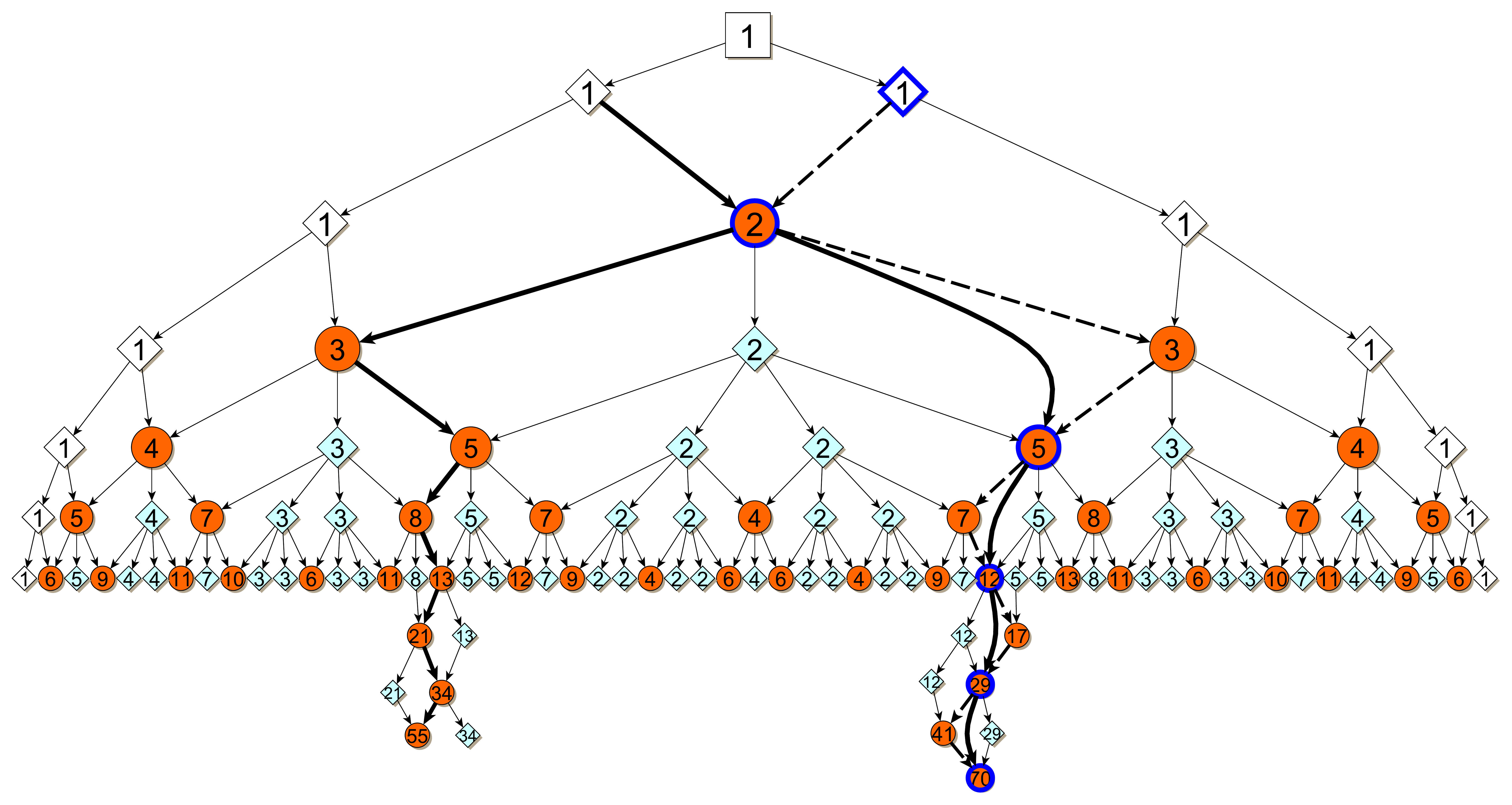}
 \caption{Fibonacci and Pell sequences in the hyperbolic Pascal triangle $\{4,5\}$}
 \label{fig:Pascal_Fibonacci}
\end{figure}

\section{A practical lemma}

\begin{lemma}\label{lemma:2seq}
Let $x_0$, $y_0$, further $a_i$, $b_i$ and $c_i$ ($i=1,2$) be complex numbers such that $a_2b_1\ne0$. Assume that
the for $n\ge n_0$ terms of the sequences $\{x_n\}$ and $\{y_n\}$ satisfy
\begin{eqnarray*}
x_{n+1}&=&a_1x_n+b_1y_n+c_1,\\
y_{n+1}&=&a_2x_n+b_2y_n+c_2.
\end{eqnarray*}
Then for both sequences
\begin{equation}\label{nomix}
z_{n+3}=(a_1+b_2+1)z_{n+2}+(-a_1b_2+a_2b_1-a_1-b_2)z_{n+1}+(a_1b_2-a_2b_1)z_n
\end{equation}
holds ($n\ge n_0$).
\end{lemma}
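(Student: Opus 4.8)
The plan is to view the pair $(x_n,y_n)$ as the orbit of a single affine self-map of $\mathbb{C}^{2}$ and to linearize the affine part by adjoining a constant coordinate. Concretely, I would set $w_n=(x_n,y_n,1)^{\mathsf T}$ and
\[
N=\begin{pmatrix} a_1 & b_1 & c_1\\ a_2 & b_2 & c_2\\ 0 & 0 & 1\end{pmatrix},
\]
so that the two hypothesized relations are equivalent to the single identity $w_{n+1}=N w_n$, valid for $n\ge n_0$, and hence $w_{n+k}=N^{k}w_n$ for every $k\ge 0$ and every $n\ge n_0$.

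Next I would read off the characteristic polynomial of $N$. Since $N$ is block lower-triangular with diagonal blocks $M=\left(\begin{smallmatrix}a_1&b_1\\ a_2&b_2\end{smallmatrix}\right)$ and $(1)$, we get
\[
\chi_N(\lambda)=(\lambda-1)\,\chi_M(\lambda)=(\lambda-1)\bigl(\lambda^{2}-(a_1+b_2)\lambda+(a_1b_2-a_2b_1)\bigr),
\]
which expands to $\chi_N(\lambda)=\lambda^{3}-(a_1+b_2+1)\lambda^{2}+\bigl(a_1+b_2+a_1b_2-a_2b_1\bigr)\lambda-(a_1b_2-a_2b_1)$. The Cayley--Hamilton theorem gives $\chi_N(N)=0$; applying this matrix identity to the vector $w_n$ for any $n\ge n_0$ (so that $w_n,w_{n+1},w_{n+2},w_{n+3}$ are genuine terms with $w_{m+1}=Nw_m$) and substituting $N^{j}w_n=w_{n+j}$ yields
\[
w_{n+3}=(a_1+b_2+1)\,w_{n+2}-\bigl(a_1+b_2+a_1b_2-a_2b_1\bigr)w_{n+1}+(a_1b_2-a_2b_1)\,w_n .
\]
Reading this vector identity in its first coordinate gives (\ref{nomix}) for $z_n=x_n$, in its second coordinate for $z_n=y_n$, and in its third the trivial $1=1$. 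It then only remains to note that $-(a_1+b_2+a_1b_2-a_2b_1)=-a_1b_2+a_2b_1-a_1-b_2$, which is exactly the middle coefficient in the statement, so the two recurrences coincide.

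The argument is routine; the one place deserving a word of care is the handling of the inhomogeneous constants $c_1,c_2$: it is precisely to absorb them that the third coordinate is adjoined, and it is precisely the extra eigenvalue $\lambda=1$ thereby introduced that raises the order of the scalar recurrence from $2$ (the homogeneous case, governed by $\chi_M$) to $3$. I would also remark that the hypothesis $a_2b_1\neq 0$ is not in fact needed for this proof; it is relevant only to the alternative, purely elementary route, in which one solves the first relation for $y_n=(x_{n+1}-a_1x_n-c_1)/b_1$ (legitimate because $b_1\neq 0$), substitutes into the second to obtain $x_{n+2}=(a_1+b_2)x_{n+1}+(a_2b_1-a_1b_2)x_n+C$ with $C$ constant, and then subtracts two consecutive instances of this relation to eliminate $C$; the symmetric elimination for $y_n$ (dividing by $a_2$) produces the same third-order recurrence.
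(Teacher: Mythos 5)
Your argument is correct, but it is a genuinely different route from the paper's. The paper proceeds by direct elimination: it solves the first relation for $y_n$ (using $b_1\neq0$), substitutes into the second to get the inhomogeneous second-order recurrence $x_{n+2}=(a_1+b_2)x_{n+1}+(-a_1b_2+a_2b_1)x_n+(b_1c_2-b_2c_1+c_1)$, and then subtracts consecutive instances to kill the constant, repeating symmetrically for $y_n$ (which uses $a_2\neq0$). You instead adjoin a constant coordinate, write the system as $w_{n+1}=Nw_n$ with $N$ the $3\times3$ block-triangular matrix (a small slip: your $N$ is block \emph{upper} triangular, not lower, though the conclusion $\chi_N=(\lambda-1)\chi_M$ is unaffected), and invoke Cayley--Hamilton; your expansion of $\chi_N$ and the resulting coefficients match (\ref{nomix}) exactly. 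Your approach buys two things: it dispenses with the hypothesis $a_2b_1\neq0$ entirely, and it makes transparent why the order rises from $2$ to $3$ (the extra eigenvalue $1$ absorbing the affine part). What the paper's elimination buys is the explicit intermediate recurrence (\ref{lastone}), which it reuses in the Remark following the lemma (the homogeneous case $c_1=c_2=0$ reduces (\ref{nomix}) back to that second-order relation); your route obtains that remark only as the statement that $\chi_M$ already annihilates the orbit when the map is linear. Both proofs are complete and correct.
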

\begin{proof}
Divide the first equation by $b_1$, eliminate $y_n$, and then plug the result into the second equation, multiply it by $b_1$ to have 
\begin{equation}\label{lastone}
x_{n+2}=(a_1+b_2)x_{n+1}+(-a_1b_2+a_2b_1)x_n+(b_1c_2-b_2c_1+c_1).
\end{equation}
In order to eliminate the constant term, we subtract it from the equivalent equation for $x_{n+3}$. Then we get immediately (\ref{nomix}).

One can show the same argument for $y_{n+3}$ by a similar manner.
\end{proof}

\begin{rem}
If both $c_1$ and $c_2$ are zero, then (\ref{nomix}) simplifies to (\ref{lastone}) since its constant term now is zero.
\end{rem}

\end{document}